\documentclass[aap,noinfoline,preprint]{imsart}

\usepackage{a4wide}
\usepackage{graphics}
\usepackage{enumitem}
\usepackage{booktabs}
\usepackage{diagbox}
\usepackage{calc}
\usepackage{microtype}
\usepackage{indentfirst}
\usepackage{url}

\usepackage{mathtools}
\usepackage{amsthm}
\usepackage{nicematrix}
\usepackage{arydshln}
\usepackage{amssymb}
\usepackage{accents}
\usepackage{bm}

\allowdisplaybreaks

\newsavebox\mybox
\newcommand{\fphantom}[2]{
	\sbox{\mybox}{$\mathsurround=0pt #2$}
	\hspace*{#1\wd\mybox}
}

\newcommand{\alignas}[3]{
	\mathmakebox[\widthof{$\mathsurround=0pt #1$}][#2]{#3}
}

\newcommand{\intercenter}[2]{%
	\multispan{#1}%
	\hfill$\mathsurround=0pt \displaystyle{\mathclap{#2}}$\hfill
	\ignorespaces
}

\DeclarePairedDelimiter{\inparen}{\lparen}{\rparen}
\DeclarePairedDelimiter{\inbrace}{\lbrace}{\rbrace}
\DeclarePairedDelimiter{\abs}{\lvert}{\rvert}
\DeclarePairedDelimiter{\floor}{\lfloor}{\rfloor}

\newcommand{\set}[2][]{
	\inbrace*{ \mathop{}
		\ifx #1\relax \relax
		#2
		\else	
		#1 \mathrel{} \middle\vert \mathrel{} #2
		\fi
	\mathop{} }
}

\def\>={\geqslant}
\def\<={\leqslant}
\newcommand{\defas}{\coloneqq}
\newcommand{\Z}{\mathbb{Z}}
\newcommand{\R}{\mathbb{R}}
\newcommand{\C}{\mathbb{C}}
\renewcommand{\d}{{\operatorname{d}}}
\renewcommand{\i}{\mathrm{i}}
\newcommand{\e}{\mathrm{e}}

\newsavebox{\foobox}
\newcommand{\slantbox}[2][0]{\mbox{
		\sbox{\foobox}{#2}
		\hskip\wd\foobox
		\pdfsave
		\pdfsetmatrix{1 0 #1 1}
		\llap{\usebox{\foobox}}
		\pdfrestore
}}
\newcommand\unslant[2][-.25]{\slantbox[#1]{$#2$}}
\let\slpi\pi
\renewcommand{\pi}{
	\alignas{\slpi}{l}{\fphantom{.125}{\slpi}\unslant\slpi}
}

\newcommand{\cin}{\alignas{{}\cos{}}{c}{{}\sin{}}}
\renewcommand{\vec}[1]{\bm{#1}}
\newcommand{\mat}[1]{\mathbf{#1}}
\newcommand{\sqrdim}[1]{
	_{\mathrlap{\!\!\!#1 \times #1}}
}
\def\T{{\mkern-1.5mu\mathsf{T}}}

\renewcommand{\:}{\,:\,}

\def\n4+{
	n_{\alignas{_\textnormal{T}}{c}{
		4\hss\scalebox{0.69444}{$\mathsurround=0pt ^+$}
	}}
}
\def\hatn4+{
	\alignas{\n4+}{l}{
		\widehat{\alignas{n_4}{l}{\n4+}}
	}
}
\newcommand{\nT}{n_\textnormal{T}}
\def\H2+{H_{2^+}\!}

\DeclareMathAccent{\wtilde}{\mathord}{largesymbols}{"65}
\let\underscore\_
\def\B_~{
	\underaccent{\wtilde}{\mat{B}}
}

\startlocaldefs
\numberwithin{equation}{section} \theoremstyle{plain}
\newtheorem{thm}{Theorem}[section]
\newtheorem{lem}{Lemma}[section]

\endlocaldefs

\setattribute{journal}{name}{}

\begin{document}

\begin{frontmatter}

  \title{On Laplacian spectrum of dendrite trees}
    
  \runtitle{On Laplacian spectrum of dendrite trees}

  \begin{aug}
    \author{\fnms{~ Yuyang} \snm{Xu}\ead[label=e1]{xuyy@connect.hku.hk}}
    \and
    \author{\fnms{~ Jianfeng} \snm{Yao~}\ead[label=e2]{jeffyao@hku.hk}}
    
    \affiliation{The University of Hong Kong}
    \runauthor{Y. Xu \&    J. Yao}

    \address{ 
      Department of Statistics and Actuarial Science\\
      The University of Hong Kong\\
      \printead{e1,e2}
    }
  \end{aug}

  \begin{abstract}
    For dendrite graphs from biological experiments on mouse's
    retinal ganglion cells, a paper by Nakatsukasa, Saito and Woei
    reveals a mysterious phase transition phenomenon in the spectra
    of the corresponding graph Laplacian matrices. While the bulk of
    the spectrum can be well understood by 
    structures resembling 
    starlike trees,  mysteries about the spikes, that is, isolated
    eigenvalues outside the bulk spectrum, remain unexplained.   In this
    paper, we bring new insights on these mysteries by considering a
    class of uniform trees.  Exact relationships between the number of
    such spikes and the number of T-junctions are analyzed in function
    of the number of vertices separating the T-junctions. Using these
    theoretical results, predictions are proposed for the number of spikes
    observed in real-life 
    dendrite graphs. Interestingly enough, these predictions match well the observed
    numbers of spikes, thus confirm the practical meaningness of our
    theoretical results. 
  \end{abstract}
  
  \begin{keyword}[class=AMS]
    \kwd[Primary ]{15A18;~}
    \kwd[Secondary ]{05C05}
  \end{keyword}

  \begin{keyword}
    \kwd{Dendrite graphs}
    \kwd{Eigenvalue distribution}
    \kwd{Graph Laplacian}
    \kwd{Retinal ganglion cells}
    \kwd{Number of spikes}
    \kwd{Spike eigenvalues}
  \end{keyword}

\end{frontmatter}

\section{Introduction}

Dendrite graphs are an important data analysis tool for understanding
biological functions of the retinal ganglion cells (RGCs) of  a mouse
\cite{Coombs06}. Characteristic features of these graphs such as dendritic field area,
dendrite lengths, or number of dendrites
can be used to classify various RGCs into different groups which do 
have different biological functions.
The details about data acquisition and their conversion to
dendrite graphs, which are almost trees, can be found
in~\cite{Saito_2009}. Figure~\ref{fig: RGC & eigs distributions}
illustrates two typical dendrite graphs of RGCs together with a plot
of eigenvalues of their graph Laplacian.
When Nakatsukasa et al.~\cite{Nakatsukasa_2013} analyzed the spectrum of the graph
Laplacian of these dendrite graphs, they revealed a ``mysterious''
phase  transition phenomenon in the spectrum.
The eigenvalue distribution consists of a
smooth bell-shaped curve ranging from~$0$ to~$4$, the {\em bulk\/},
followed by a sudden jump at~$4$ and a few {\em spikes\/}, i.e., isolated
eigenvalues greater than $4$. In order to understand this phenomenon,
they considered the family of starlike trees, i.e., trees with
exactly one vertex of degree greater than $2$, as a simplified model for
these complex dendrite graphs. Actually,  the bulk of the dendrite
spectra is well approximated 
by such starlike trees. However, the mechanism that generates those
spike eigenvalues remains unknown.

\begin{figure}[tp]
	\centering
	\includegraphics[width=\textwidth]{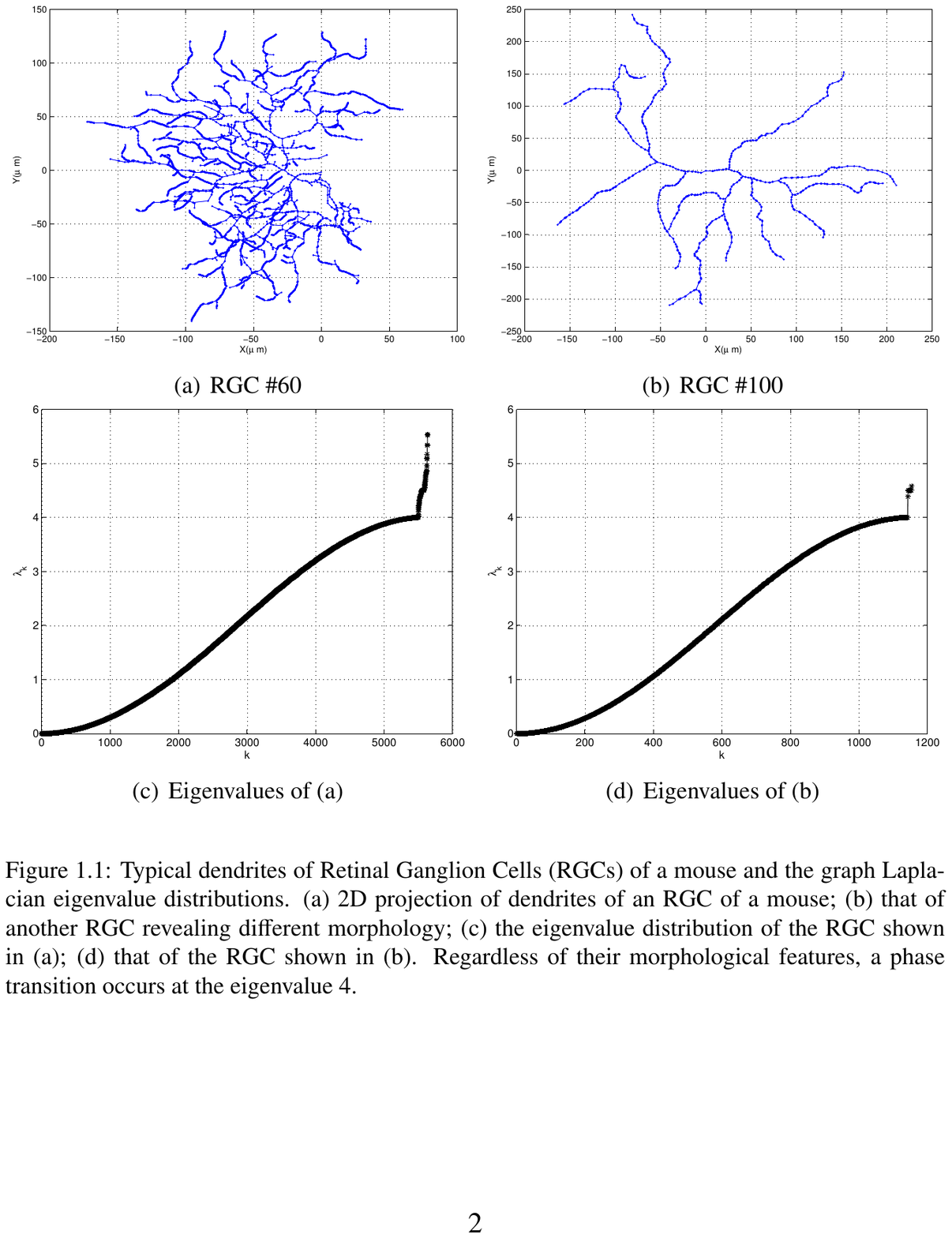}
	\caption{\textup{(a)} and \textup{(b):} typical 2D projection of dendrites of retinal ganglion
      cells \textup(RGCs\textup)  of a mouse\textup; \textup{(c)} and \textup{(d):} eigenvalue distribution
      of corresponding graph Laplacians. Both graphs show a phase transitions occur at
      the value\/~$4$. Data and code used for producing the figure by courtesy of Prof.~Naoki Saito
      \textup(University of California, Davis\textup).
      \label{fig: RGC & eigs distributions}}
\end{figure}

This paper shed new lights on the ``mysteries around the graph
Laplacian eigenvalue $4$'' found in \cite{Nakatsukasa_2013}.  We observe that dendrite trees consist of a
number of T-junctions, whose degree are exactly $3$, a lot of path
vertices, whose degree are exactly $2$, and pendant vertices, whose
degree are~$1$. As a more involved model than starlike trees, we
consider a class of uniform trees. Precisely, 
let $m \>= 0$ and $k \>= 1$ be nonnegative integers.
A tree with maximum degree of $3$ is a {\em uniform tree of type $(m,k)$\/}, denoted as $H_{m,k}$, 
if
\begin{enumerate}
\item the tree has a path lining a number of T-junctions (vertices
  of degree $3$) where any two adjacent T-junctions are separated by
  a segment made with a given number of $m$ vertices, called
  {\em trunk\/};
\item
  from each T-junction departs a {\em branch\/} which is a simple
  path with $k$ vertices terminating at
  a pendant vertex with degree $1$.
\end{enumerate}
Some examples of uniform trees are given in Figure~\ref{fig: uniform tree}.
\begin{figure}
  \centering
  \includegraphics[width = \textwidth]{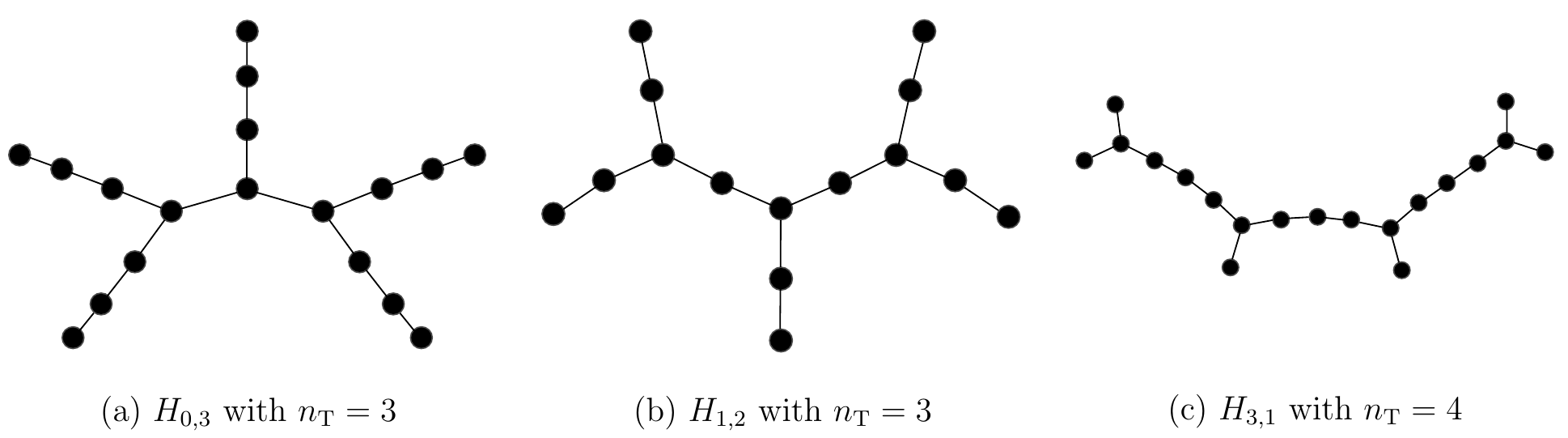}
  \caption{Examples of uniform trees\/ $H_{m,k}$ with trunk
    length\/ $m$ and branch length\/ $k$.
  \label{fig: uniform tree}}
\end{figure}

Our research starts from some numerical experiments on such uniform
trees. To explain empirical findings  from these numerical experiments,
let us first introduce some notations.
Consider  an arbitrary  graph  $G = (V,E)$  with $n$
vertices in $V$  and edge set $E$.
Let  $\mat{L}(G)$ be  its  Laplacian matrix
with eigenvalues  $\lambda_1(G) \>=
\dotsb \>= \lambda_j(G) \>= \dotsb \>= \lambda_n(G) = 0$.
Define 
\begin{description}[labelsep = \parindent, leftmargin = \parindent + \widthof{$m\>=0$}, parsep=0pt]
\item $\n4+(G) \defas \#\set[\lambda_j(G)]{\lambda_j(G) \>= 4,\ 1\<=j\<=n}$, i.e., the number
  of eigenvalues of $\mat{L}(G)$ at least equal to 4;
\item $\nT(G) \defas \#\set[v_j \in V]{\deg(v_j) = 3,\ 1\<=j\<=n} \>= 2$, the number of T-junctions in~$G$.
\end{description}

\smallskip
Our empirical findings of uniform trees $H_{m,k}$ are as follows:
\begin{enumerate}
\item[(a)] For any $H_{m,k}$ with trunk length $m \>= 2$,  $\n4+(H_{m,k}) = \nT(H_{m,k})$ for all $k \>= 1$;
\item[(b)] For $H_{m,k}$ with trunk length $m=1$ and any fixed
T-junction number ${ \nT(\cdot) \equiv \nT(H_{m,k})}$, $\n4+(H_{m,k}) \nearrow \nT$ as $k \to \infty$;  \smallskip
\item[(c)] For $H_{m,k}$ with trunk length $m=0$ and any fixed
T-junction number ${ \nT(\cdot) \equiv \nT(H_{m,k})}$, $\n4+(H_{m,k}) \nearrow \floor*{ \dfrac{\nT+1}{2} }$ as $k \to \infty$. \smallskip
\end{enumerate}

The convergences in (b) and (c) above can be verified with the
numerical results given in 
Tables~\ref{tab: m=1} and \ref{tab: m=0}, respectively. 

\begin{table}
	\centering
	\caption{Uniform trees with constant trunk length\/ $m=1$\textup: numerical values of\/ 
      $n_{\hspace{-1pt}4^{\hspace{-1pt}+\!}}(H_{1,k})$ for various
      branch lengths\/ $k$ and \textup{T}-junction numbers\/ $\nT$.}
	\begin{tabular}{c|cccccccccccccccc}
		\diagbox{$k$}{$\n4+(H_{1,k})$}{$\nT$} & 2 & 3 & 4 & 5 & 6 & 7 & 8 & 9 & 10 & 15 & 20 & 25 & 30 & 40 & 50 &  \\
	\hline
		            \phantom{00}1             & 1 & 2 & 2 & 3 & 4 & 4 & 5 & 5 & 6  & 9  & 12 & 15 & 18 & 24 & 30 &  \\
		            \phantom{00}2             & 2 & 2 & 3 & 4 & 5 & 5 & 6 & 7 & 7  & 11 & 14 & 18 & 21 & 28 & 36 &  \\
		            \phantom{00}3             &   & 3 & 3 & 4 & 5 & 6 & 6 & 7 & 8  & 12 & 15 & 19 & 23 & 31 & 38 &  \\
		            \phantom{00}4             &   &   & 4 & 4 & 5 & 6 & 7 & 7 & 8  & 12 & 16 & 20 & 24 & 32 & 40 &  \\
		            \phantom{00}5             &   &   &   & 4 & 5 & 6 & 7 & 8 & 9  & 13 & 17 & 21 & 25 & 33 & 41 &  \\
		            \phantom{00}6             &   &   &   & 5 & 5 & 6 & 7 & 8 & 9  & 13 & 17 & 21 & 25 & 33 & 42 &  \\
		            \phantom{00}7             &   &   &   &   & 5 & 6 & 7 & 8 & 9  & 13 & 17 & 21 & 25 & 34 & 42 &  \\
		            \phantom{00}8             &   &   &   &   & 6 & 6 & 7 & 8 & 9  & 13 & 17 & 22 & 26 & 34 & 43 &  \\
		            \phantom{00}9             &   &   &   &   &   & 6 & 7 & 8 & 9  & 13 & 18 & 22 & 26 & 35 & 43 &  \\
		            \phantom{0}10             &   &   &   &   &   & 7 & 7 & 8 & 9  & 13 & 18 & 22 & 26 & 35 & 43 &  \\
		            \phantom{0}15             &   &   &   &   &   &   & 8 & 8 & 9  & 14 & 18 & 23 & 27 & 36 & 45 &  \\
		            \phantom{0}20             &   &   &   &   &   &   &   & 9 & 9  & 14 & 18 & 23 & 27 & 37 & 46 &  \\
		            \phantom{0}30             &   &   &   &   &   &   &   &   & 10 & 14 & 19 & 23 & 28 & 37 & 46 &  \\
		            \phantom{0}50             &   &   &   &   &   &   &   &   &    & 15 & 19 & 24 & 29 & 38 & 47 &  \\
		                 100                  &   &   &   &   &   &   &   &   &    &    & 20 & 24 & 29 & 39 & 48 &  \\
		                 150                  &   &   &   &   &   &   &   &   &    &    &    & 25 & 29 & 39 & 49 &  \\
		                 200                  &   &   &   &   &   &   &   &   &    &    &    &    & 30 & 39 & 49 &  \\
	\hline
	\end{tabular}
	\label{tab: m=1}
\end{table}

\begin{table}
	\centering
	\caption{Uniform trees with constant trunk length\/ $m=0$\textup: numerical values of\/
      $n_{\hspace{-1pt}4^{\hspace{-1pt}+\!}}(H_{0,k})$ for various
      branch lengths\/ $k$ and \textup{T}-junction numbers\/ $\nT$.}
	\begin{tabular}{c|cccccccccccc}
		\diagbox{$k$}{$\n4+(H_{0,k})$}{$\nT$} & 2 & 3 & 4 & 5 & 6 & 7 & 8 & 9 & 10 & 15 & 30 & 50 \\
	\hline
		            \phantom{00}1             & 1 & 1 & 2 & 2 & 2 & 3 & 3 & 4 & 4  & 6  & 12 & 20 \\
		            \phantom{00}2             & \smash{\vdots} & 2 & 2 & 2 & 3 & 3 & 4 & 4 & 5  & 7  & 13 & 22 \\
		            \phantom{00}3             & \smash{\vdots} & \smash{\vdots} & \smash{\vdots} & 3 & 3 & 4 & 4 & 4 & 5  & 7  & 14 & 23 \\
		            \phantom{00}4             & \smash{\vdots} & \smash{\vdots} & \smash{\vdots} & \smash{\vdots} & \smash{\vdots} & \smash{\vdots} & \smash{\vdots} & 5 & 5  & 7  & 14 & 24 \\
		            \phantom{00}5             & \smash{\vdots} & \smash{\vdots} & \smash{\vdots} & \smash{\vdots} & \smash{\vdots} & \smash{\vdots} & \smash{\vdots} & \smash{\vdots} & \smash{\vdots}  & 7  & 15 & 24 \\
		            \phantom{00}6             & \smash{\vdots} & \smash{\vdots} & \smash{\vdots} & \smash{\vdots} & \smash{\vdots} & \smash{\vdots} & \smash{\vdots} & \smash{\vdots} & \smash{\vdots}  & 8  & 15 & 24 \\
		            \phantom{00}7             & \smash{\vdots} & \smash{\vdots} & \smash{\vdots} & \smash{\vdots} & \smash{\vdots} & \smash{\vdots} & \smash{\vdots} & \smash{\vdots} & \smash{\vdots}  & \smash{\vdots}  & \smash{\vdots} & 24 \\
		            \phantom{00}8             & \smash{\vdots} & \smash{\vdots} & \smash{\vdots} & \smash{\vdots} & \smash{\vdots} & \smash{\vdots} & \smash{\vdots} & \smash{\vdots} & \smash{\vdots}  & \smash{\vdots}  & \smash{\vdots} & 25 \\
		            \phantom{0}20             & \smash{\vdots} & \smash{\vdots} & \smash{\vdots} & \smash{\vdots} & \smash{\vdots} & \smash{\vdots} & \smash{\vdots} & \smash{\vdots} & \smash{\vdots}  & \smash{\vdots}  & \smash{\vdots} & \smash{\vdots} \\
		                 100                  & 1 & 2 & 2 & 3 & 3 & 4 & 4 & 5 & 5  & 8  & 15 & 25 \\
	\hline
	\end{tabular}
	\label{tab: m=0}
\end{table}

The main contribution of this paper is a rigorous proof for the three
empirical findings (a)-(b)-(c) above.

These theoretical results on uniform trees provide a way to explain the
spectra of the graph Laplacians of real-life dendrite trees as those
shown in Figure~\ref{fig: RGC & eigs distributions}. Indeed, these
dendrite graphs can be viewed as a mixture of many uniform trees of
different types, that is, with different trunk and branch lengths.
Using the results (a)-(b)-(c),  we provide in Section~\ref{sec:RGC} an  estimation method  for the
number of spikes in term of different T-junctions and corresponding
trunk lengths observed in these dendrite graphs.  Estimates resulting
from this method are thus compared to
the empirical values found  in the dendrite
graphs. Interestingly enough, the estimates match the empirical values
very well. 

The remaining of the paper is organized as follows.
Section~\ref{sec:prelim} recalls some known and useful  results on Laplacian
eigenvalues of trees  and 
matrix perturbation.
The main results of the paper are introduced in Section~\ref{sec:main}.
As mentioned, we apply these theoretical results on {uniform trees} to
estimate the number of spikes in dendrite graphs in Section~\ref{sec:RGC}.

\section{Preliminary Results}\label{sec:prelim}

A few known results on Laplacian
eigenvalues of starlike trees and matrix perturbation will be useful.
First recall that 
a \emph{starlike tree\/} is defined as a tree with exactly one vertex
of degree at least equal to~$3$.
So the smallest starlike tree is $K_{1,3}$, which is also known as a claw.

\begin{lem}[Y.~Nakatsukasa et al. \cite{Nakatsukasa_2013}] \label{lem: starlike tree}
  A starlike tree has exactly one Laplacian eigenvalue greater than or equal to\/~$4$. The equality holds if and only if the starlike tree is\/~$K_{1,3}$.
\end{lem}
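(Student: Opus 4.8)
The plan is to reduce the statement to an inertia count for the shifted Laplacian $\mat{L}(T) - 4\mat{I}$ and to carry out that count recursively along the tree. First I would root the starlike tree $T$ at its unique vertex $c$ of degree $d \>= 3$, so that $T$ decomposes into $d$ pendant paths (the arms) attached to $c$. Writing $n_+, n_0, n_-$ for the numbers of Laplacian eigenvalues that are $>4$, $=4$ and $<4$, the claim is exactly that $n_+ + n_0 = 1$, with $n_0 = 1$ iff $T = K_{1,3}$. By Sylvester's law of inertia these three numbers can be read off from any diagonal matrix congruent to $\mat{L}(T) - 4\mat{I}$, and for trees such a diagonalization is produced by the bottom-up Jacobs--Trevisan procedure: set $a(v) \defas \deg(v) - 4$, process the vertices from the leaves toward $c$, and at each internal vertex replace $a(v)$ by $a(v) - \sum_{u} 1/a(u)$, the sum running over the children $u$ already treated (each unit edge weight enters squared). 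Then $n_+, n_0, n_-$ equal the numbers of positive, zero and negative final values.

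Next I would evaluate these values arm by arm. Inside one arm the pendant leaf receives $a = 1 - 4 = -3$, and each successive degree-$2$ vertex receives $a_j = -2 - 1/a_{j-1}$. Solving this one-term recurrence gives $a_j = -(2j+1)/(2j-1)$, which lies in $[-3,-1)$ and in particular is never zero; hence the procedure never divides by zero inside an arm, and every vertex of every arm contributes a strictly negative value. Thus all $n-1$ non-central vertices already account for $n-1$ eigenvalues strictly below $4$, and the entire behaviour at $4$ is decided by the single value $a(c)$.

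It then remains to examine $a(c)$. The vertex of an arm with $\ell_i$ vertices that is adjacent to $c$ carries the value $a_{\ell_i}$, and $-1/a_{\ell_i} = (2\ell_i-1)/(2\ell_i+1)$, a quantity lying in $[1/3,1)$ with minimum $1/3$ attained exactly when $\ell_i = 1$. Summing over the $d$ arms,
\[
a(c) = d - 4 + \sum_{i=1}^{d} \frac{2\ell_i - 1}{2\ell_i + 1} \>= d - 4 + \frac{d}{3} = \frac{4(d-3)}{3} \>= 0 ,
\]
since $d \>= 3$, and equality holds throughout iff $d = 3$ and $\ell_1 = \ell_2 = \ell_3 = 1$, i.e.\ iff $T = K_{1,3}$. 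Because every non-central value is negative while $a(c) \>= 0$, we get $n_+ + n_0 = 1$ always, with $a(c) = 0$ (the $K_{1,3}$ case) giving the unique eigenvalue exactly $4$ and $a(c) > 0$ giving it strictly above $4$. This is precisely the assertion.

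The genuinely delicate points, and where I would spend the care, are the two monotone sign facts that make the recursion easy to read: that the arm values stay in $[-3,-1)$ (guaranteeing both strict negativity and the absence of any zero-division exception in the algorithm) and that the per-arm contribution $(2\ell_i-1)/(2\ell_i+1)$ is bounded below by the sharp constant $1/3$. The first is a short induction on the recurrence $a_j = -2 - 1/a_{j-1}$; the second is what pins down the equality case so cleanly. An alternative, should one prefer to avoid quoting the diagonalization procedure, would be to write the eigenvector equation on each arm as a second-order recurrence, pass to the regime $\lambda > 4$ where its solutions are sign-alternating hyperbolic, and count sign changes in the resulting secular equation at the center; but this is appreciably more computational, so I would present the inertia argument above.
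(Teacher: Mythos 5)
Your proof is correct, and it is worth noting that it is self-contained where the paper is not: the paper states this lemma as an imported result of Nakatsukasa, Saito and Woei \cite{Nakatsukasa_2013} and gives no proof of its own, and the argument in that source runs along classical spectral lines, essentially the bound $\lambda_1 \>= \Delta + 1$ (with $\Delta$ the maximum degree) to produce one eigenvalue $\>= 4$, plus interlacing-style counting in the spirit of the Grone--Merris--Sunder edge-removal lemma reproduced here as Lemma~\ref{lem: interlacing theorem} and the path spectra of Equation~\eqref{eq: path graph eigs} to exclude a second one. Your route is genuinely different: you diagonalize $\mat{L}(T) - 4\mat{I}$ by the leaf-to-root Jacobs--Trevisan scheme and read off the inertia, and the details check out. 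The arm recurrence $a_j = -2 - 1/a_{j-1}$ with $a_1 = -3$ does solve to $a_j = -(2j+1)/(2j-1) \in [-3,-1)$, so every pivot preceding the centre is negative and nonzero; since the centre's value is never itself used as a pivot, ordinary symmetric Gaussian elimination already furnishes the congruence, so the exception clause of the algorithm is never invoked, as you observe. The single evaluation $a(c) = d - 4 + \sum_{i=1}^{d}(2\ell_i - 1)/(2\ell_i + 1) \>= 4(d-3)/3 \>= 0$, with equality exactly when $d = 3$ and $\ell_1 = \ell_2 = \ell_3 = 1$, settles existence, uniqueness and the equality case $K_{1,3}$ in one stroke. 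Your argument even delivers slightly more than the statement, namely that the remaining $n-1$ eigenvalues lie strictly below $4$, and it quantifies the margin through $a(c)$; what it costs is an appeal to the tree-diagonalization machinery, which sits outside the preliminary toolkit this paper assembles (Lemmas~\ref{lem: n4+ upper bound}--\ref{lem: recursion for alpha_k}), whereas an interlacing proof would use only tools the paper already quotes and reuses in the base cases of Theorems~\ref{thm: m>=2} and~\ref{thm: m=0}.
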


There is a well-known upper bound for the number of Laplacian spikes for trees where vertices have degree $3$ at most. 
\begin{lem}[Y.~Nakatsukasa et al. \cite{Nakatsukasa_2013}] \label{lem: n4+ upper bound}
  For any finite tree\/~$G$ with maximum degree of\/~$3$, we have 
  \[
	\n4+(G) \<= \nT(G).
  \]
\end{lem}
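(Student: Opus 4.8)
The plan is to express $\n4+(G)$ as a count of large eigenvalues of $\mat{L}(G)$ and to control it by Cauchy's interlacing theorem applied after deleting exactly the T-junctions. Write $T \subseteq V$ for the set of the $\nT(G)$ vertices of degree $3$, and let $\mat{B}$ be the principal submatrix of $\mat{L}(G)$ obtained by deleting the rows and columns indexed by $T$; thus $\mat{B}$ has order $n - \nT(G)$. Since $\mat{B}$ arises from $\mat{L}(G)$ by removing $\nT(G)$ rows and columns, Cauchy interlacing yields
\[
  \lambda_{\nT(G) + 1}(G) \<= \lambda_1(\mat{B}),
\]
where $\lambda_1(\mat{B})$ denotes the largest eigenvalue of $\mat{B}$. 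Hence it suffices to prove $\lambda_1(\mat{B}) < 4$: this forces $\lambda_{\nT(G)+1}(G) < 4$, so that the eigenvalues of $\mat{L}(G)$ reaching $4$ are confined to $\lambda_1(G), \dotsc, \lambda_{\nT(G)}(G)$, giving $\n4+(G) \<= \nT(G)$.

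Next I would identify $\mat{B}$ structurally. Deleting all degree-$3$ vertices from a tree of maximum degree $3$ leaves a forest in which every surviving vertex has degree at most $2$, that is, a disjoint union of paths (some possibly trivial). Accordingly $\mat{B}$ is block diagonal, with one symmetric tridiagonal block $\mat{B}_P$ per path component $P$: its off-diagonal entries are $-1$ and each diagonal entry equals the degree \emph{in $G$} of the corresponding vertex. The crucial observation is that every vertex outside $T$ has degree $1$ or $2$ in $G$, so every diagonal entry of $\mat{B}$ lies in $\{1,2\}$, and $\lambda_1(\mat{B}) = \max_P \lambda_1(\mat{B}_P)$.

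It then remains to show each block satisfies $\lambda_1(\mat{B}_P) < 4$, equivalently that $4\mat{I} - \mat{B}_P$ is positive definite. This matrix is tridiagonal with off-diagonal entries $+1$ and diagonal entries $4 - \deg_G(\cdot) \in \{2,3\}$, hence weakly diagonally dominant with positive diagonal; being associated to a connected path it is also irreducible. The main point to verify — and what I expect to be the only genuine subtlety — is \emph{strict} diagonal dominance in at least one row, which upgrades weak dominance to positive definiteness through the theory of irreducibly diagonally dominant matrices. Here one uses that each endpoint of a path component is either a genuine pendant of $G$ (diagonal $1$, giving dominance $3 > 1$) or a degree-$2$ vertex whose missing neighbour is a T-junction (diagonal $2$, giving dominance $2 > 1$), while a trivial one-vertex component contributes a positive $1\times 1$ block directly. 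In every case strict dominance holds at an endpoint, so $4\mat{I} - \mat{B}_P$ is positive definite, whence $\lambda_1(\mat{B}) < 4$ and the interlacing step above closes the proof. The same computation covers the degenerate case $\nT(G) = 0$, where $\mat{B} = \mat{L}(G)$ is itself a single path block and $\n4+(G) = 0$.
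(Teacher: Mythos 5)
Your proof is correct, and all the genuinely delicate points are handled: the Cauchy interlacing step $\lambda_{\nT(G)+1}(G) \<= \lambda_1(\mat{B})$ is applied properly, the identification of $\mat{B}$ as a direct sum of tridiagonal path blocks with diagonal entries given by degrees \emph{in $G$} (not in the induced subgraph) is the right observation, and the upgrade from weak to strict diagonal dominance via the endpoint rows (pendant gives $3>1$, T-adjacent endpoint gives $2>1$, trivial components give positive $1\times 1$ blocks) together with irreducibility correctly yields $4\mat{I} - \mat{B}_P$ positive definite, hence $\lambda_1(\mat{B}) < 4$; the degenerate case $\nT(G)=0$ is also covered. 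Note that the paper itself offers no proof of this lemma --- it is quoted with attribution to Nakatsukasa, Saito and Woei --- so there is no internal argument to compare against; your route (delete the degree-$3$ vertices, observe that what remains is a disjoint union of paths whose perturbed path-Laplacian blocks have spectrum strictly below $4$, then interlace) is essentially the standard argument from that cited source, with your irreducible-diagonal-dominance step serving as a clean, self-contained replacement for explicit eigenvalue formulas for tridiagonal matrices with perturbed corners such as those in Lemma~\ref{lem: eigs of tridiagonal matrices}.
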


Next is a very useful interlacing lemma for perturbation on Laplacian eigenvalues.
\begin{lem}[R.~Grone et al. \cite{Grone_1990}] \label{lem: interlacing theorem}
  Let\/~$G$ be a tree on\/~$n$ vertices. Suppose\/~$\widetilde{G}$ is
  a subtree of\/~$G$ obtained by removing exactly one edge, then the\/
  $n{-}1$ largest eigenvalues of\/ $\mat{L}(\widetilde{G})$ interlaces the eigenvalues of\/ $\mat{L}(G)$. That is,
  \[
  	\lambda_1(G) \>= \lambda_1(\widetilde{G}) \>= \lambda_2(G) \>= \dotsb \>= \lambda_{n-1}(G) \>= \lambda_{n-1}(\widetilde{G}) = \lambda_n(G) = 0.
  \]
\end{lem}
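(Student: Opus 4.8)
The plan is to realize $\mat{L}(G)$ as a rank-one positive semidefinite perturbation of $\mat{L}(\widetilde{G})$ and then invoke the classical interlacing theorem for such perturbations. Writing the removed edge as $e = \{u,v\}$, I let $\vec{b} \defas \vec{e}_u - \vec{e}_v \in \R^n$ be its signed incidence vector, where $\vec{e}_u,\vec{e}_v$ are standard basis vectors. Starting from the edge decomposition $\mat{L}(G) = \sum_{\{i,j\} \in E} (\vec{e}_i - \vec{e}_j)(\vec{e}_i - \vec{e}_j)^\T$ and isolating the term indexed by $e$, I obtain
\[
  \mat{L}(G) = \mat{L}(\widetilde{G}) + \vec{b}\,\vec{b}^\T,
\]
so that $\mat{L}(G)$ differs from $\mat{L}(\widetilde{G})$ by the rank-one symmetric positive semidefinite matrix $\vec{b}\,\vec{b}^\T$.

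Second, I would apply the standard rank-one update interlacing inequalities: if $A = B + \vec{c}\,\vec{c}^\T$ with $A,B$ symmetric and $\vec{c}\,\vec{c}^\T \succeq 0$, then the decreasingly ordered eigenvalues satisfy $\lambda_i(A) \>= \lambda_i(B) \>= \lambda_{i+1}(A)$ for every $i$. A self-contained derivation runs through the Courant--Fischer max-min characterization. Since $\vec{b}\,\vec{b}^\T \succeq 0$ gives $\vec{x}^\T \mat{L}(G)\,\vec{x} \>= \vec{x}^\T \mat{L}(\widetilde{G})\,\vec{x}$ for all $\vec{x}$, monotonicity of the max-min immediately yields the left inequalities $\lambda_i(G) \>= \lambda_i(\widetilde{G})$; intersecting the competing test subspaces with the hyperplane $\vec{b}^\perp$, on which the two quadratic forms coincide, and using the dimension count $\dim(U \cap \vec{b}^\perp) \>= \dim U - 1$, produces the complementary right inequalities $\lambda_i(\widetilde{G}) \>= \lambda_{i+1}(G)$. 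Specializing to $A = \mat{L}(G)$ and $B = \mat{L}(\widetilde{G})$ reproduces exactly the interlacing chain in the statement.

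Finally, I would settle the boundary terms. Because $G$ is a connected tree, $\mat{L}(G)$ has a one-dimensional kernel, so $\lambda_n(G) = 0$ while $\lambda_{n-1}(G) > 0$; because deleting a single edge from a tree splits it into exactly two connected components, $\widetilde{G} = G - e$ has Laplacian nullity two, whence $\lambda_{n-1}(\widetilde{G}) = \lambda_n(\widetilde{G}) = 0$. This justifies terminating the displayed chain at $\lambda_{n-1}(\widetilde{G}) = \lambda_n(G) = 0$ and explains why only the $n{-}1$ largest eigenvalues of $\mat{L}(\widetilde{G})$ enter the statement (the trailing inequality $\lambda_n(G) \>= \lambda_n(\widetilde{G})$ being the trivial $0 \>= 0$). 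The only delicate point is keeping the directions of the rank-one bounds consistent with this kernel count; once the decomposition above is in hand, no genuine obstacle remains.
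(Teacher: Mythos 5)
Your proof is correct. Note first that the paper itself offers no proof of this lemma: it is imported verbatim from Grone, Merris and Sunder \cite{Grone_1990}, so there is no in-paper argument to compare against, and you have in effect supplied the standard derivation behind the citation. Your three steps are all sound: the edge decomposition $\mat{L}(G) = \sum_{\{i,j\}\in E}(\vec{e}_i - \vec{e}_j)(\vec{e}_i - \vec{e}_j)^\T$ does give $\mat{L}(G) = \mat{L}(\widetilde{G}) + \vec{b}\,\vec{b}^\T$ with $\vec{b} = \vec{e}_u - \vec{e}_v$; the rank-one interlacing $\lambda_i(G) \>= \lambda_i(\widetilde{G}) \>= \lambda_{i+1}(G)$ then follows exactly as you argue, the left inequality from monotonicity of the Rayleigh quotient under the positive semidefinite bump and the right one from the dimension count $\dim(U \cap \vec{b}^{\perp}) \>= \dim U - 1$ together with the fact that the two quadratic forms agree on $\vec{b}^{\perp}$; and the kernel bookkeeping is right, since connectedness of the tree gives nullity one for $\mat{L}(G)$ while deleting one edge disconnects a tree into exactly two components, giving nullity two for $\mat{L}(\widetilde{G})$, which pins down $\lambda_{n-1}(\widetilde{G}) = \lambda_n(G) = 0$ and explains why only the $n{-}1$ largest eigenvalues of $\mat{L}(\widetilde{G})$ appear. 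One cosmetic remark: the paper's phrasing ``$\widetilde{G}$ is a subtree'' is loose, since $G - e$ is a two-component forest rather than a tree, and your proof implicitly corrects this --- indeed your argument \emph{needs} the two-component structure for the nullity count, so you might flag that reading explicitly. Beyond that, no gaps.
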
 

An important consequence of this interlacing lemma on trees is as
follows. Consider a tree $G$ and a pendant vertex $s$ of $G$, that
is, the degree of $s$ is 1. Let $\widetilde{G}$ be the sub-tree
obtained from $G$ by removing $s$ and the edge terminating at $s$.
Then  $\n4+(\widetilde{G}) \<= \n4+({G})$.
Consequently  $\n4+( H_{m,k})$ is increasing in the branch length
$k$.
Taking into account the upper bound $\nT(G)$ in Lemma~\ref{lem: n4+
  upper bound}, if we let both the number of T-junctions $\nT(\cdot)$
and the trunk length
$m$ be fixed and the branch length $k$ tending to infinity, it holds that
\begin{equation}
	{\lim_{k\to\infty}}  \n4+( H_{m,k}) = \ell = \ell\bigl( m, \nT(\cdot) \bigr) \<= \nT(\cdot),
\end{equation}
for a limit $\ell$ depending on $m$ and $\nT(\cdot)$. 

The following result on tridiagonal matrices will be also useful.
\begin{lem}[W.~C.~Yueh et al. \cite{YUEH_2008}] \label{lem: eigs of tridiagonal matrices}
  Consider a tridiagonal matrix
  \[
	\mat{A}_n = \left(
      \begin{NiceArray}[nullify-dots]{RCCCL}
        \gamma + b & c &        &   &            \\
                 a & b & \Ddots &   &            \\
                   &   & \Ddots &   &            \\
                   &   & \Ddots & b & c          \\
                   &   &        & a & b + \delta
      \end{NiceArray}\right)\sqrdim{n},
  \]	
where\/ $a,b,c,\gamma,\delta \in \C$, $ac \neq 0$ and\/ $n \>= 2$. The eigenvalues {\textup(unordered\textup)} of\/~$\mat{A}_n$ are given by
\begin{gather}
  \lambda_j = b + 2\sqrt{a}\sqrt{c}\cos\theta_{\!j}, \qquad j = 1,2,\dotsc,n, \label{eq: eigs}
  \shortintertext{where\/ $\theta_{\!j} \in \C$ satisfy}
  \begin{dcases}
		ac\sin{(n{+}1)\theta_{\!j}} - \sqrt{a}\sqrt{c}(\gamma+\delta)\sin{n\theta_{\!j}} + \gamma\delta\sin{(n{-}1)\theta_{\!j}} = 0;
        & \textnormal{if } \sin\theta_{\!j} \neq 0, \\
		ac\cdot(n{+}1) - \cos\theta_{\!j}\sqrt{a}\sqrt{c}(\gamma+\delta)\cdot n + \gamma\delta\cdot(n{-}1) = 0;
			& \textnormal{if } \sin\theta_{\!j}   =  0.
	\end{dcases} \notag
\end{gather}
\end{lem}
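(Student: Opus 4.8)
The plan is to diagonalize $\mat{A}_n$ by solving the eigenvector equations directly, treating the two perturbed corner entries $\gamma$ and $\delta$ as boundary conditions imposed on an otherwise constant-coefficient three-term recurrence. Writing $\mat{A}_n\vec{x} = \lambda\vec{x}$ componentwise for an eigenvector $\vec{x} = (x_1,\dotsc,x_n)^\T$, every interior row $2\<=p\<=n{-}1$ reads
\[
  a\,x_{p-1} + (b-\lambda)\,x_p + c\,x_{p+1} = 0,
\]
whereas the first and last rows differ from this pattern only through $\gamma$ and $\delta$. Introducing two ghost unknowns $x_0$ and $x_{n+1}$, I would recast the first row as the same interior recurrence together with the side condition $a\,x_0 = \gamma\,x_1$, and the last row as the interior recurrence together with $c\,x_{n+1} = \delta\,x_n$. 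This turns the eigenproblem into a single scalar recurrence valid for $1\<=p\<=n$ subject to two linear boundary constraints.

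Next I would solve that recurrence. Putting $\lambda = b + 2\sqrt{a}\sqrt{c}\cos\theta$ --- a substitution that is onto $\C$ as $\theta$ ranges over $\C$, and is legitimate since $ac\neq 0$ fixes a nonzero value of $\sqrt{a}\sqrt{c}$ --- the characteristic equation $c\,r^2 + (b-\lambda)\,r + a = 0$ has the roots $r_{\pm} = \tfrac{\sqrt{a}}{\sqrt{c}}\,\e^{\pm\i\theta}$, which are distinct exactly when $\sin\theta\neq 0$. In that generic case the general solution is $x_p = C_1 r_+^{\,p} + C_2 r_-^{\,p}$, while the double root in the degenerate case $\sin\theta = 0$ forces $x_p = \bigl(\tfrac{\sqrt a}{\sqrt c}\bigr)^{p}(C_1 + C_2\,p)$.

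I would then impose the two boundary constraints. Substituting $x_p = C_1 r_+^{\,p} + C_2 r_-^{\,p}$ into $a\,x_0 = \gamma\,x_1$ and $c\,x_{n+1} = \delta\,x_n$ yields a homogeneous $2\times 2$ system in $(C_1,C_2)$, and a nonzero eigenvector exists iff its determinant vanishes. After factoring out the common powers of $\sqrt{a}/\sqrt{c}$ and using $c\,r_\pm = \sqrt{a}\sqrt{c}\,\e^{\pm\i\theta}$ together with $\e^{-\i m\theta}-\e^{\i m\theta} = -2\i\sin m\theta$, this determinant collapses to
\[
  ac\sin(n{+}1)\theta - \sqrt{a}\sqrt{c}\,(\gamma+\delta)\sin n\theta + \gamma\delta\sin(n{-}1)\theta = 0,
\]
which is precisely the stated condition when $\sin\theta\neq 0$. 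The degenerate case is recovered either from the repeated-root solution $x_p = \bigl(\tfrac{\sqrt a}{\sqrt c}\bigr)^p(C_1+C_2 p)$ or, more quickly, by dividing the last display by $\sin\theta$ and letting $\theta\to 0$ or $\theta\to\pi$, where $\sin m\theta/\sin\theta$ tends to $\pm m$; this produces exactly the companion polynomial condition carrying the factor $\cos\theta_{\!j}$.

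The main obstacle, and the only genuinely delicate part, is the determinant simplification together with the bookkeeping around it: one must fix a single branch of the complex square roots so that $\sqrt{a}\sqrt{c}$ is used consistently throughout, handle the degenerate $\sin\theta = 0$ case without dividing by zero, and verify that the construction is exhaustive. For the last point I would write $\sin k\theta = U_{k-1}(\cos\theta)\sin\theta$ with $U_{k-1}$ the Chebyshev polynomial of the second kind, so that (after removing the factor $\sin\theta$) the transcendental equation becomes a polynomial of degree $n$ in $\lambda$; a direct check that it agrees up to a nonzero scalar with $\det(\lambda\mat{I}-\mat{A}_n)$ then shows that the admissible $\theta_{\!j}$ exhaust the spectrum with the correct multiplicities.
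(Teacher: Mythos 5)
The paper offers no proof of this lemma to compare against: it is imported as a known preliminary from Yueh and Cheng \cite{YUEH_2008}, and the paper only uses its conclusion (in particular the real-symmetric specialization $a=c$ noted after the statement). Your argument is correct and is, in substance, the standard difference-equation proof of such results, essentially the route taken in the cited reference itself: recast the corner perturbations as the ghost-point boundary conditions $a\,x_0=\gamma\,x_1$ and $c\,x_{n+1}=\delta\,x_n$, solve the constant-coefficient recurrence with $\lambda=b+2\sqrt{a}\sqrt{c}\cos\theta$ and roots $r_\pm=(\sqrt{a}/\sqrt{c})\,\e^{\pm\i\theta}$, and force the $2\times 2$ boundary determinant to vanish. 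I checked the collapse of that determinant: using $c\,r_\pm=\sqrt{a}\sqrt{c}\,\e^{\pm\i\theta}$ and $\gamma\,(\sqrt{a}/\sqrt{c})\cdot\sqrt{a}\sqrt{c}=\gamma a$, the difference of the two products is $-2\i\bigl(ac\sin(n{+}1)\theta-\sqrt{a}\sqrt{c}(\gamma{+}\delta)\sin n\theta+\gamma\delta\sin(n{-}1)\theta\bigr)$ up to a nonzero factor, exactly the first stated condition; and in the repeated-root case $x_p=r^p(C_1+C_2p)$ with $r=\pm\sqrt{a}/\sqrt{c}$, the same elimination yields $ac(n{+}1)-\cos\theta\,\sqrt{a}\sqrt{c}(\gamma{+}\delta)\,n+\gamma\delta(n{-}1)=0$ with $\cos\theta=\pm1$, so the degenerate branch is recovered rigorously and not only by the limiting heuristic. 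You correctly identify the two points that need care: a fixed branch of $\sqrt{a}\sqrt{c}$ throughout (surjectivity of $\cos$ on $\C$ then makes the parametrization exhaustive), and the multiplicity count, for which your Chebyshev reduction $ac\,U_n-\sqrt{a}\sqrt{c}(\gamma{+}\delta)U_{n-1}+\gamma\delta\,U_{n-2}$ evaluated at $(\lambda-b)/(2\sqrt{a}\sqrt{c})$ should be matched against $\det(\lambda\mat{I}-\mat{A}_n)$; this match follows from the usual three-term recursion for the principal minors and is routine. The only detail left implicit is that a nontrivial $(C_1,C_2)$ yields a nonzero eigenvector, which is immediate since $r_\pm$ are nonzero and distinct (resp.\ since $C_1+C_2p$ cannot vanish for all $p$), so the plan is complete as stated.
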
 

{In particular, when $a,b,c,\gamma,\delta$ are all real and $a=c$, $\mat{A}_n$ is real symmetric thus with real eigenvalues. By Equation~\eqref{eq: eigs}, we have all $\theta_{\!j} \in \R$.}

Also, note that for a path graph~$P_n$ with~$n$ vertices, i.e., a tree with maximum degree of~$2$, the Laplacian matrix is 
\[
	\mat{L}(P_n) = 
		\begin{pNiceMatrix}[nullify-dots]
			 1 & -1 &        &    &    \\
			-1 &  2 & \Ddots &    &    \\
			   &    & \Ddots &    &    \\
			   &    & \Ddots &  2 & -1 \\
			   &    &        & -1 &  1
		\end{pNiceMatrix}\sqrdim{n},
\]
which is a special case of~$\mat{A}_n$ in Lemma~\ref{lem: eigs of tridiagonal matrices}.
Applying the lemma we recover the well-known Laplacian eigenvalues of
$P_n$ given by 
\begin{equation} \label{eq: path graph eigs}
  \lambda_j(P_n) = 4 \sin^2\inparen*{\frac{n{-}j}{2n}\pi }, \qquad j = 1,2,\dotsc,n.
\end{equation}
They  are all less than~$4$.

Finally, we recall  the classical Schur complement formula.
\begin{lem}[Schur Complement Formula] \label{lem: recursion for alpha_k}
For an\/ $n \times n$ real matrix\/~$\mat{A}$, define\/ $\mat{A}_j,\
j=1,2,\dotsc,n$, to be the\/ $j$-th major submatrix of order\/
$n{-}1$, i.e., the submatrix resulting from\/~$\mat{A}$ by deleting its\/ $j$-th row and\/ $j$-th column.
Suppose both\/ $\mat{A}$ and\/~$\mat{A}_j$ are nonsingular, then
\[
	a^{jj} = \frac{1}{a_{jj} - \vec{\alpha}_j^\T \mat{A}_j^{-1} \vec{\beta}_j},
\]
where\/ $a^{jj}$ and\/~$a_{jj}$ are the\/ $j$-th diagonal entry of\/ $\mat{A}^{-1}$ and\/~$\mat{A}$, respectively\textup; $\vec{\alpha}_j^\T$ is the vector obtained from the\/ $j$-th row of\/~$\mat{A}$ by deleting its\/ $j$-th entry, and\/~$\vec{\beta}_j$ is the vector obtained from the\/ $j$-th column of\/~$\mat{A}$ by deleting its\/ $j$-th entry.
\end{lem}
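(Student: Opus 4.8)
The plan is to reduce the statement to the bottom-right corner by a symmetric permutation and then to exploit a block-triangular factorization of~$\mat{A}$. First I would dispose of the index~$j$. Let $\mat{P}$ be the permutation matrix that swaps positions $j$ and $n$ and fixes every other index. Since $\bigl(\mat{P}\mat{A}\mat{P}^\T\bigr)^{-1} = \mat{P}\mat{A}^{-1}\mat{P}^\T$, the diagonal entry $a^{jj}$ of $\mat{A}^{-1}$ equals the $(n,n)$ entry of $\bigl(\mat{P}\mat{A}\mat{P}^\T\bigr)^{-1}$; moreover the leading $(n{-}1)\times(n{-}1)$ block of $\mat{P}\mat{A}\mat{P}^\T$ is exactly $\mat{A}_j$, its last column with the corner removed is $\vec{\beta}_j$, its last row with the corner removed is $\vec{\alpha}_j^\T$, and its corner entry is $a_{jj}$. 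Hence it suffices to prove the identity for $j=n$, where $\mat{A}$ has the block form
\[
  \mat{A} = \begin{pmatrix} \mat{A}_n & \vec{\beta}_n \\ \vec{\alpha}_n^\T & a_{nn} \end{pmatrix},
\]
with $\mat{A}_n$ nonsingular by hypothesis.

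Second, I would introduce the Schur complement $s \defas a_{nn} - \vec{\alpha}_n^\T \mat{A}_n^{-1}\vec{\beta}_n$ and write the unit-lower/upper block factorization
\[
  \mat{A} = \begin{pmatrix} \mat{I} & \vec{0} \\ \vec{\alpha}_n^\T \mat{A}_n^{-1} & 1 \end{pmatrix}\begin{pmatrix} \mat{A}_n & \vec{\beta}_n \\ \vec{0}^\T & s \end{pmatrix},
\]
which is verified by multiplying the two factors out; the bottom-right block reproduces $a_{nn}$ precisely because the definition of $s$ cancels the term $\vec{\alpha}_n^\T \mat{A}_n^{-1}\vec{\beta}_n$. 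Taking determinants gives $\det\mat{A} = s\,\det\mat{A}_n$, and since both $\mat{A}$ and $\mat{A}_n$ are nonsingular this forces $s\neq 0$, so both factors on the right are themselves invertible.

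Third, I would read off the corner entry of the inverse. Inverting the factorization, the inverse of the block-upper-triangular right factor is again block-upper-triangular with $(n,n)$ entry $1/s$, while the inverse of the unit-lower-triangular left factor is unit-lower-triangular; multiplying in the correct order shows that the $(n,n)$ entry of $\mat{A}^{-1}$ is $1/s$. Equivalently, one can bypass the explicit inverse and invoke the cofactor form of Cramer's rule, $a^{nn} = \det(\mat{A}_n)/\det(\mat{A})$, and substitute $\det\mat{A} = s\,\det\mat{A}_n$ to obtain $a^{nn} = 1/s$ at once. Undoing the permutation then yields the claimed identity $a^{jj} = 1/\bigl(a_{jj} - \vec{\alpha}_j^\T \mat{A}_j^{-1}\vec{\beta}_j\bigr)$.

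There is no genuine obstacle here, as this is a classical computation; the only points requiring a little care are the bookkeeping in the permutation step—confirming that deleting row and column~$j$ corresponds to the leading block of $\mat{P}\mat{A}\mat{P}^\T$ and that $\vec{\alpha}_j$ and $\vec{\beta}_j$ land in the intended positions—and checking that the unit-lower-triangular left factor does not disturb the corner entry of the inverse, which holds because its inverse is again unit-lower-triangular and alters only entries strictly below the diagonal of the product.
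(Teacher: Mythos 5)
Your proof is correct. There is nothing in the paper to compare it against: the lemma is stated in the preliminaries as a recalled classical fact (``Finally, we recall the classical Schur complement formula'') with no proof given, so your block-triangular factorization --- or equivalently the Cramer's-rule shortcut $a^{nn} = \det(\mat{A}_n)/\det(\mat{A})$ combined with $\det\mat{A} = s\,\det\mat{A}_n$ --- is exactly the standard argument one would supply, and every step of it checks out: the factorization multiplies out as claimed, $s \neq 0$ follows from the determinant identity and the nonsingularity hypotheses, and the $(n,n)$ entry of the product of the two triangular inverses is $1/s$ because the unit-lower-triangular factor's inverse has last column $(0,\dotsc,0,1)^\T$. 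One bookkeeping correction in your reduction step, which you yourself flagged as the place needing care: if $\mat{P}$ is the transposition swapping $j$ and $n$, the leading $(n{-}1)\times(n{-}1)$ block of $\mat{P}\mat{A}\mat{P}^\T$ is \emph{not} literally $\mat{A}_j$ but a simultaneous row-and-column permutation of it, since the index formerly equal to $n$ now sits in position $j$ rather than at the end of the retained indices. This is harmless --- the scalar $\vec{\alpha}_j^\T \mat{A}_j^{-1} \vec{\beta}_j$ is invariant under replacing $\bigl(\mat{A}_j, \vec{\alpha}_j, \vec{\beta}_j\bigr)$ by $\bigl(\mat{Q}\mat{A}_j\mat{Q}^\T, \mat{Q}\vec{\alpha}_j, \mat{Q}\vec{\beta}_j\bigr)$ for any permutation matrix $\mat{Q}$ --- but if you want the leading block to be exactly $\mat{A}_j$ as written, conjugate instead by the cyclic shift $(j, j{+}1, \dotsc, n)$, which preserves the relative order of the remaining indices. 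With either that adjustment or the invariance remark, the proof is complete and fully rigorous.
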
 

\section{Main Results}\label{sec:main}

Throughout this section, we consider a uniform tree with all its
T-junctions aligned on a single line.
Whenever no ambiguity is possible about  the branch length $k$, we simply use $H_m$ to denote $H_{m,k}$ for $m=0,1$ and $H_{2^+}$ for $m\>=2$.

\subsection{Uniform trees with trunk length\/ $m \>= 2$}
\begin{thm} \label{thm: m>=2} For uniform trees with trunk length $m \>= 2$, 
we have
\[
	\n4+(H_{m,k}) = \nT(H_{m,k}), \quad \forall k \>= 1.
\]
\end{thm}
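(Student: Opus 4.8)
The plan is to establish the two inequalities $\n4+(H_{m,k}) \<= \nT(H_{m,k})$ and $\n4+(H_{m,k}) \>= \nT(H_{m,k})$ separately; their combination yields the claimed equality for every $k \>= 1$. The upper bound is immediate: since $H_{m,k}$ is a finite tree with maximum degree $3$, Lemma~\ref{lem: n4+ upper bound} gives $\n4+(H_{m,k}) \<= \nT(H_{m,k})$ with no restriction on $m$. All the real work lies in the matching lower bound, and this is where the hypothesis $m \>= 2$ enters in an essential way.

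For the lower bound I would construct an auxiliary forest by cutting the spine. Label the T-junctions along the spine as $T_1, \dotsc, T_{\nT}$. Between two consecutive T-junctions $T_i$ and $T_{i+1}$ the trunk is a path on $m$ degree-$2$ vertices, so the corresponding spine segment $T_i - u_1 - \dotsb - u_m - T_{i+1}$ contains $m+1$ edges. Because $m \>= 2$, at least one of these edges, say $u_1 u_2$, is \emph{interior}, i.e.\ incident to neither T-junction. Delete one such interior edge from each of the $\nT-1$ trunk segments. The resulting graph $F$ is a disjoint union of $\nT$ components, and in each deletion the two endpoints of the cut edge are genuine path vertices whose degree drops from $2$ to $1$, while every T-junction retains all three of its incident edges. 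Consequently each component of $F$ contains exactly one vertex of degree $3$ together with its intact branch and trunk legs, hence is a starlike tree.

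It then remains to compare the spike counts of $F$ and of $H_{m,k}$. Each component of $F$ is starlike, so by Lemma~\ref{lem: starlike tree} it contributes exactly one Laplacian eigenvalue $\>= 4$; since the Laplacian of a forest is block-diagonal, $\n4+(F) = \nT$. On the other hand, deleting an edge $e$ removes from the Laplacian a positive-semidefinite rank-one term $\mat{b}_e \mat{b}_e^\T$ (the edge's contribution), so $\mat{L}(F) \preceq \mat{L}(H_{m,k})$ in the Loewner order; by Weyl's monotonicity $\lambda_j(F) \<= \lambda_j(H_{m,k})$ for every $j$, whence deleting edges can only decrease the number of eigenvalues reaching $4$. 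Therefore $\n4+(H_{m,k}) \>= \n4+(F) = \nT(H_{m,k})$, and combined with the upper bound this gives equality. The argument is uniform in $k$, so no separate induction on the branch length is needed, although one could alternatively fix $k=1$ and invoke the monotonicity of $\n4+$ in $k$ recorded after Lemma~\ref{lem: interlacing theorem}.

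The step I expect to require the most care is the verification that the cut components are genuinely starlike, together with the bookkeeping of why $m \>= 2$ is exactly the right threshold: the interior trunk edge needed for the cut exists precisely when $m \>= 2$, whereas for $m \in \{0,1\}$ any spine cut is forced to sever an edge incident to a T-junction, destroying a degree-$3$ vertex and hence the clean one-spike-per-component decomposition. This is the structural reason the phenomenon degrades for $m=1$ and $m=0$, foreshadowing findings (b) and (c). A secondary point to state cleanly is the direction of the edge-deletion eigenvalue inequality, which follows from the semidefinite edge decomposition of the Laplacian rather than from the vertex-removal interlacing of Lemma~\ref{lem: interlacing theorem}.
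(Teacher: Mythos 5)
Your proposal is correct, and its skeleton coincides with the paper's: the upper bound is exactly Lemma~\ref{lem: n4+ upper bound}, and the lower bound comes from cutting trunk edges not incident to any T-junction (such edges exist precisely because $m \>= 2$) so that the resulting pieces are starlike trees, followed by Lemma~\ref{lem: starlike tree} and eigenvalue monotonicity under edge deletion. Where you genuinely diverge is in the organization. The paper argues by induction on $\nT$: it removes a \emph{single} interior trunk edge, applies the inductive hypothesis to the two components, and transfers their $t_1 + t_2 = \nT$ eigenvalues ${\>=}\,4$ back to $\H2+$ via the interlacing Lemma~\ref{lem: interlacing theorem}. You instead cut one interior edge in \emph{every} trunk simultaneously, obtaining all $\nT$ starlike components in one stroke, and replace interlacing by the Loewner comparison $\mat{L}(F) \preceq \mat{L}(H_{m,k})$ coming from the positive-semidefinite edge decomposition of the Laplacian, plus Weyl monotonicity. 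This buys two things. First, no induction at all, and the argument is manifestly uniform in $k$. Second --- a real if small advantage --- the paper's induction step applies its hypothesis to components that are not literally uniform trees: near the cut, the end T-junction of each component carries a trunk stub of length at most $m-1$ in place of a branch of length $k$, so the hypothesis as stated does not apply verbatim and would need to be formulated for a class closed under such cuts; your one-shot reduction only ever needs the components to be starlike, and Lemma~\ref{lem: starlike tree} is indifferent to leg lengths. Conversely, note that your Weyl step is not genuinely new machinery: the monotonicity $\lambda_j(\widetilde{G}) \<= \lambda_j(G)$ for one deleted edge is exactly the upper half of Lemma~\ref{lem: interlacing theorem}, so you could equally iterate that lemma $\nT-1$ times (observing that it applies component-wise to forests), while your rank-one semidefinite argument is the standard proof of that lemma, so nothing external is smuggled in. Your closing explanation of why $m \>= 2$ is the exact threshold --- for $m \in \{0,1\}$ any spine cut must destroy a degree-$3$ vertex --- matches precisely how the paper uses the hypothesis and correctly foreshadows the degradation in findings (b) and (c).
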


\begin{proof}  The result is proved by induction on~$\nT$.
\begin{itemize}[label = $\lozenge$, labelsep = *, leftmargin = 0pt, itemindent = \parindent - (\widthof{\itshape{B\/}} - \widthof{\itshape{B}}), listparindent = \parindent]
\item \emph{Base Case}  \quad When $\nT=2$, let
  $\widetilde{H} = (V, E {\setminus} \inbrace{e})$, where $e$ is an
  edge on  the trunk  in~$\H2+$ that is not incident to any
  T-junction. The existence of such~$e$ is guaranteed by the
  assumption that $m \>= 2$, thus at least three edges lie between two
  consecutive T-junctions. Then $\widetilde{H}$ consists of two
  connected components, say $\widetilde{S}_1$ and~$\widetilde{S}_2$,
  which are both starlike trees. Figure~\ref{fig: m=2;nT=2} shows an
  example of $\H2+$ and~$\widetilde{H}$ with corresponding
  $\widetilde{S}_1$, $\widetilde{S}_2$ and an edge~$e$ removed  from $\H2+$.

\begin{figure}
	\centering
	\includegraphics[width = .95\textwidth]{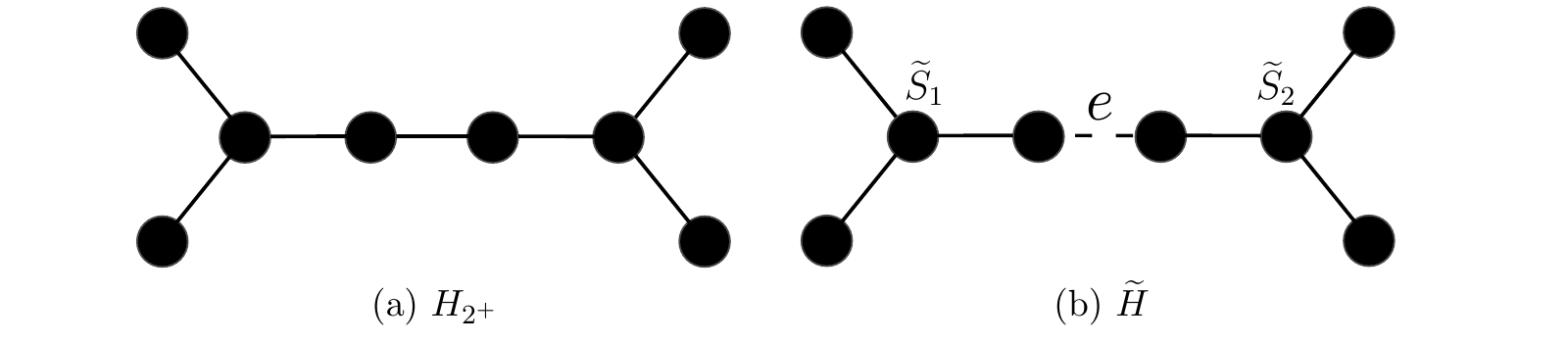}
	\caption{Illustration of base case in the proof of Theorem~\ref{thm: m>=2}. \textup{(a)}~$\H2+$ with\/ $\nT = 2$\textup; \textup{(b)}~$\widetilde{H}$ and its two connected components\/ $\widetilde{S}_1$, $\widetilde{S}_2$, obtained from\/~$\H2+$ by the removal of edge\/ $e$.}
	\label{fig: m=2;nT=2}
\end{figure} 
		
By Lemma~\ref{lem: starlike tree}, we have
\[
	\lambda_1(\widetilde{S}_1) \>= 4 > \lambda_2(\widetilde{S}_1), \quad \lambda_1(\widetilde{S}_2) \>= 4 > \lambda_2(\widetilde{S}_2).
\]
Without loss of generality, assume $\lambda_1(\widetilde{S}_1) \>= \lambda_1(\widetilde{S}_2)$. {Then we will have
\begin{equation}
	\label{eq: result}
	\lambda_1(\widetilde{H}) = \lambda_1(\widetilde{S}_1) \>= 4, \quad \lambda_2(\widetilde{H}) = \lambda_1(\widetilde{S}_2) \>= 4.
\end{equation}
Note that by Lemma~\ref{lem: interlacing theorem}, we have
\[
	\lambda_1(\H2+) \>= \lambda_1(\widetilde{H}) \>= \lambda_2(\H2+) \>= \lambda_2(\widetilde{H}).
\]
Using the result from~\eqref{eq: result}, we get}
\[
	\lambda_1(\H2+) \>= \lambda_1(\widetilde{S}_1) \>= \lambda_2(\H2+) \>= \lambda_1(\widetilde{S}_2) \>= 4.
\]
Hence, $\n4+ \>= 2$. However, by Lemma~\ref{lem: n4+ upper bound}, we have $\n4+ \<= \nT = 2$. Therefore, $\n4+ = \nT$ in this case.

\item \emph{Induction step}
\quad
Suppose $\n4+ = \nT$ holds for all $\nT \<= i$, for some $i \>= 2$.

When $\nT = i{+}1$, similarly, let $\widetilde{H} = (V, E {\setminus}
\{e\})$, where $e$ is an edge on a  trunk  in~$\H2+$ and is not incident to any T-junction. Let $\widetilde{H}_1$ and~$\widetilde{H}_2$ be the two connected components of~$\widetilde{H}$, with $t_1$ and~$t_2$ T-junctions, respectively. By inductive assumption we have
\begin{gather*}
	\lambda_1(\widetilde{H}_1) \>= \cdots \>= \lambda_{t_1}(\widetilde{H}_1) \>= 4 > \lambda_{t_1 + 1}(\widetilde{H}_1), \\
	\lambda_1(\widetilde{H}_2) \>= \cdots \>= \lambda_{t_2}(\widetilde{H}_2) \>= 4 > \lambda_{t_2 + 1}(\widetilde{H}_2).
\end{gather*}
Note that $t_1 + t_2 = \nT = i{+}1$. By Lemma~\ref{lem: interlacing theorem}, we have
\[
	\lambda_1(\H2+) \>= \lambda_1(\widetilde{H}) \>= \cdots \>= \lambda_{i+1}(\H2+) \>= \lambda_{i+1}(\widetilde{H}) \>= 4.
\]
Hence, $\n4+ \>= i{+}1$. However, by Lemma~\ref{lem: n4+ upper bound} we have $\n4+ \<= \nT = i{+}1$. Hence $\n4+ = \nT$. \qedhere
\end{itemize}
\end{proof}

\subsection{Uniform trees with trunk length\/ $m = 1$}
Unlike the previous situation with trunk length $m \>= 2$,  
numerical results  in Table~\ref{tab: m=1}  shows that $\n4+(H_{1,k}) \neq
\nT(H_{1,k})$ may happen when $m=1$. The same table also shows that 
$\n4+$ approaches $\nT$ as the branch length increases.

\begin{thm} \label{thm: m=1}
Consider uniform trees with trunk length\/ $m = 1$. When all the branches of\/~$H_1$ are sufficiently long, i.e., when\/ $k$ is large enough, we have
\[
	\n4+(H_{1,k}) = \nT(H_{1,k}).
\]
\end{thm}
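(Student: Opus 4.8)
The plan is to count the nonnegative eigenvalues of $\mat{L}(H_{1,k}) - 4\mat{I}$, since $\n4+(H_{1,k})$ is exactly the number of eigenvalues of $\mat{L}(H_{1,k})$ that are at least $4$. First I would order the vertices so that the $k$ vertices of each branch come first and the backbone vertices (the $\nT$ T-junctions together with the single trunk vertices separating them) come last. Writing $\mat{B}_k$ for the principal submatrix of $\mat{L}(H_{1,k})$ supported on a single branch, I would show that $\mat{B}_k - 4\mat{I}$ is negative definite: $\mat{B}_k$ is a principal submatrix of the positive semidefinite Laplacian, hence positive semidefinite, while a short determinant recursion from the pendant end gives $\det(\mat{B}_k - 4\mat{I}) = (-1)^k(2k+1) \neq 0$, so combined with Gershgorin's theorem (all eigenvalues lie in $[0,4]$) every eigenvalue of $\mat{B}_k$ is strictly below $4$.

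Next I would invoke Haynsworth's inertia additivity formula for Schur complements (the matrix companion of the scalar formula in Lemma~\ref{lem: recursion for alpha_k}) to peel off the branch blocks. Since each $\mat{B}_k - 4\mat{I}$ is negative definite, it contributes no nonnegative eigenvalue, so the entire nonnegative spectrum of $\mat{L}(H_{1,k}) - 4\mat{I}$ is carried by the Schur complement $\mat{S}_k$ onto the backbone. Using $[(\mat{B}_k - 4\mat{I})^{-1}]_{11} = -\tfrac{2k-1}{2k+1}$, the matrix $\mat{S}_k$ is an explicit tridiagonal matrix of order $2\nT - 1$ with off-diagonal entries $-1$: trunk vertices carry diagonal $-2$, interior T-junctions (one branch) carry diagonal $-\tfrac{2}{2k+1}$, and the two end T-junctions (which carry two branches each) carry diagonal $\tfrac{2k-3}{2k+1}$. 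Thus $\n4+(H_{1,k})$ equals the number of nonnegative eigenvalues of this small matrix, \emph{exactly}, for every $k$.

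Letting $k \to \infty$, the matrix $\mat{S}_k$ converges entrywise to the fixed tridiagonal matrix $\mat{S}_\infty$ whose trunk diagonals are $-2$, whose interior T-junction diagonals are $0$, and whose two boundary diagonals are $1$. I would then read off the inertia of $\mat{S}_\infty$ from its leading principal minors $M_0, M_1, \dotsc, M_{2\nT - 1}$: the three-term recursion $M_j = \delta_j M_{j-1} - M_{j-2}$, with $\delta_j$ the $j$-th diagonal, produces by induction on $\nT$ the periodic sign pattern $(+,+,-,-,+,+,-,-,\dotsc)$, so the sequence has exactly $\nT - 1$ sign changes and $\mat{S}_\infty$ is nonsingular. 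By the Sturm--Jacobi count this yields $\nT - 1$ negative and $\nT$ positive eigenvalues for $\mat{S}_\infty$, all bounded away from $0$; this nonsingularity of the limit (in contrast with the $m=0$ situation) is precisely what makes the $m=1$ count clean.

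Finally, since $\mat{S}_\infty$ is nonsingular with exactly $\nT$ positive eigenvalues, continuity of eigenvalues under the convergence $\mat{S}_k \to \mat{S}_\infty$ shows that for all sufficiently large $k$ the matrix $\mat{S}_k$ likewise has exactly $\nT$ positive and no zero eigenvalues; hence $\n4+(H_{1,k}) = \nT(H_{1,k})$, in agreement with the a priori upper bound of Lemma~\ref{lem: n4+ upper bound}. I expect the main obstacle to be the inductive verification of the minor sign pattern for $\mat{S}_\infty$ at every $\nT$ (and the bookkeeping of the boundary diagonals arising from the doubly-branched end T-junctions); the inertia reduction to $\mat{S}_k$ and the determinant evaluations are routine once the vertex ordering is fixed.
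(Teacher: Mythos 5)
Your proposal is correct, and after the shared opening move it takes a genuinely different route from the paper's proof. Both arguments begin by eliminating the branches through a Schur complement over the branch blocks, and your numerical ingredients all check out against the paper's: your value $\bigl[(\mat{B}_k-4\mat{I})^{-1}\bigr]_{11}=-\frac{2k-1}{2k+1}$ is exactly the paper's $\alpha_k(4)$, your $\det(\mat{B}_k-4\mat{I})=(-1)^k(2k+1)$ is consistent with the branch eigenvalues $2+2\cos\frac{2j\pi}{2k+1}$ obtained there from Lemma~\ref{lem: eigs of tridiagonal matrices}, and your diagonal entries for $\mat{S}_k$ (trunk vertices $-2$, interior T-junctions $-\frac{2}{2k+1}$, doubly-branched end T-junctions $\frac{2k-3}{2k+1}$) agree with the paper's $\mat{A}+\mat{C}$ evaluated at $\lambda=4$. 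From there the paths diverge. The paper keeps $\lambda$ as a running parameter: it passes to the limiting determinant, performs a \emph{second} Schur complement compressing the order-$(2t-1)$ backbone to a $t\times t$ tridiagonal matrix $\mat{T}(\lambda)$, factors $\det\mat{T}=\prod_l\bigl(b(\lambda)-2\cos\theta_l(\lambda)\bigr)$ via Lemma~\ref{lem: eigs of tridiagonal matrices}, and counts roots above $4$ by an intermediate-value-plus-monotonicity argument ($\theta_l'(\lambda)>0$, with sign changes of each $\mu_l$ between $\lambda=4$ and $\lambda=2+2\sqrt{2}$). You instead freeze the shift at $4$ and invoke Haynsworth inertia additivity, obtaining the exact identity $\n4+(H_{1,k})=\#\set{\text{nonnegative eigenvalues of }\mat{S}_k}$ for \emph{every} finite $k$, and then read off the inertia of the entrywise limit $\mat{S}_\infty$ from a Sturm--Jacobi minor count. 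The induction you flagged as the main obstacle does close cleanly: with diagonal sequence $(1,-2,0,-2,\dotsc,0,-2,1)$ the recursion $M_j=\delta_j M_{j-1}-M_{j-2}$ gives $M_{2j}=(-1)^j(2j+1)$ and $M_{2j+1}=(-1)^j$ for $0\<= j\<= t-2$, and finally $M_{2t-2}=(-1)^{t-1}(2t-1)$ and $M_{2t-1}=(-1)^{t-1}\,2t$, so all leading principal minors are nonzero with exactly $t-1$ sign changes, yielding $t-1$ negative and $t$ positive eigenvalues as you claim. As for what each approach buys: your fixed-shift inertia argument is more elementary and, notably, replaces the paper's somewhat informal appeal to ``continuity of $H_{1,k}\mapsto\lambda_j(H_{1,k})$'' for the infinite-branch tree $H_{1,\infty}$ (where the matrix dimension grows with $k$) by an honest entrywise limit of fixed-size $(2t-1)\times(2t-1)$ matrices, which tightens the weakest step of the published proof; the paper's parametric method, in exchange, localizes the $t$ spike eigenvalues (each factor $\mu_l$ has a single crossing just above $4$) and builds the machinery $\operatorname{g}(\lambda,\theta)$ and $\omega$ that is reused almost verbatim to prove the sharper $m=0$ statement in Theorem~\ref{thm: m=0}.
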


\begin{proof}
For simplicity, we will use $t$ instead of $\nT$, and denote the Laplacian matrix of a uniform tree~$H_1$ by~$\mat{L}_1$. By considering first the main path connecting all the T-junctions and then the branch(es) on each T-junction, the Laplacian matrix $\mat{L}_1$ has the form (see Figure~\ref{fig: H1-L1})

\begin{figure}
	\centering
	\includegraphics[width = \textwidth]{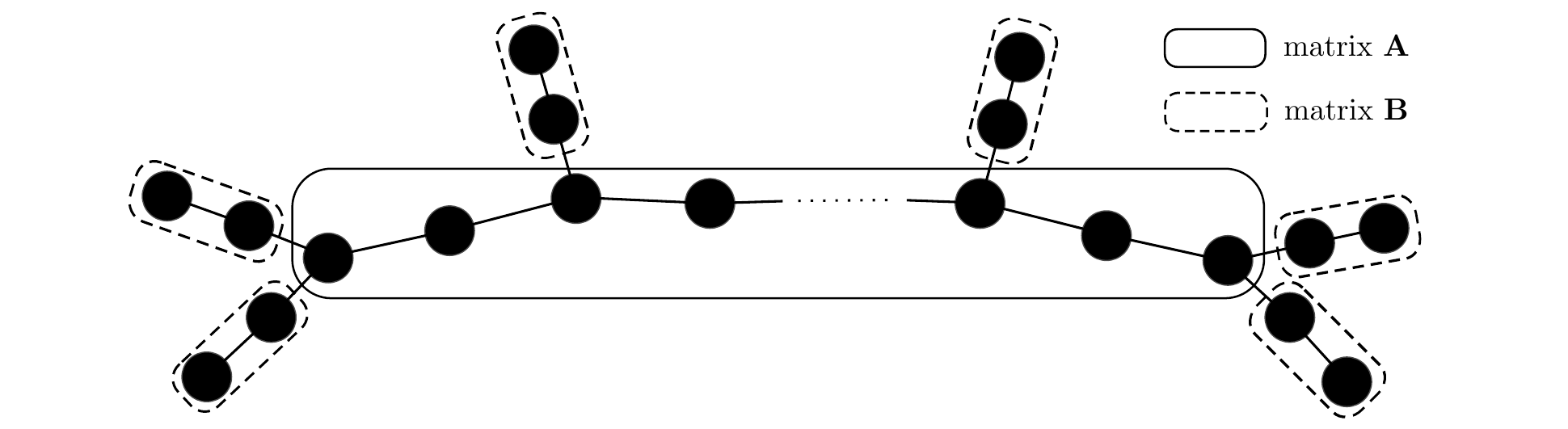}
	\caption{Illustration of uniform tree\/~$H_{1,k}$ and its counterpart in\/~$\mat{L}_1$, with\/ $\nT = 4$.}
	\label{fig: H1-L1}
\end{figure}

\begin{gather*} 
	\mat{L}_1 = \left(
		\begin{NiceArray}[nullify-dots]{C@{}C@{\ }CCL}
			 \mat{A}\phantom{_1} & \mat{E}_1^\T &          \mat{E}_2^\T              & \Cdots & \mat{E}_{t+2}^\T                  \\
			 \mat{E}         _1  &   \mat{B}    &                                    &        &                                   \\
			 \mat{E}         _2  &              & \alignas{\mat{E}_2^\T}{c}{\mat{B}} &        &                                   \\
			  \Vdots             &              &                                    & \Ddots &                                   \\
			\mat{E}_{t+2}        &              &                                    &        & \alignas{\mat{E}_2^\T}{c}{\mat{B}}
		\end{NiceArray}\right),
\shortintertext{where}
	\!\begin{aligned}
		\mat{A}\phantom{_1}
			&=	\begin{pNiceMatrix}[nullify-dots]
					 3 & -1 &    &        &    &    \\
					-1 &  2 & -1 &        &    &    \\
					   & -1 &  3 & \Ddots &    &    \\
					   &    &    & \Ddots &    &    \\
					   &    &    & \Ddots &  2 & -1 \\
					   &    &    &        & -1 &  3 
				\end{pNiceMatrix}\sqrdim{(2t-1)},{}
					& \mat{B}
						&= \left(\renewcommand{\arraystretch}{1.2}
							\begin{NiceArray}[nullify-dots]{
								 !{\fphantom{.5}{-}}
								C!{\phantom{-}}
								C!{\fphantom{.5}{-}}
								C!{\fphantom{.5}{-}}
								C!{\phantom{-}}
								C!{\fphantom{.5}{-}}
								}
								         2         & \alignas{2}{c}{-1} &        &                    &                    \\
								\alignas{2}{c}{-1} &                    & \Ddots &                    &                    \\
								                   &                    & \Ddots &                    &                    \\
								                   &                    & \Ddots &          2         & \alignas{1}{c}{-1} \\
								                   &                    &        & \alignas{2}{c}{-1} &          1
							\end{NiceArray}\right)\sqrdim{k}, \\
		\mat{E}         _1
			&=	\begin{pNiceMatrix}[nullify-dots] 
					-1      & 0      & \Cdots & 0      \\
					 0      &        &        &        \\
					 \Vdots & \Vdots &        & \Vdots \\
					 0      & 0      & \Cdots & 0      
				\end{pNiceMatrix},{} 
					& \mat{E}_{t+2}
						&=	\begin{pNiceMatrix}[nullify-dots]
								0      & \Cdots & 0      & -1      \\
								       &        &        &  0      \\
								\Vdots &        & \Vdots &  \Vdots \\
								0      & \Cdots & 0      &  0      
							\end{pNiceMatrix}, \bigskip\\\\
		\mat{E}         _i
			&= {\begin{pNiceMatrix}[nullify-dots]
					0      & \Cdots & 0      & \mathclap{\smash{\stackrel{(2i-3)\text{-th column}}{\stackrel{\vee}{-1}}}} & 0      & \Cdots & 0      \\
					       &        &        & 0                                                                          &        &        &        \\
					\Vdots &        & \Vdots & \Vdots                                                                     & \Vdots &        & \Vdots \\
					0      & \Cdots & 0      & 0                                                                          & 0      & \Cdots & 0      
				\end{pNiceMatrix}}, \mathrlap{\quad i = 2, 3, \dotsc, t{+}1.}
		\end{aligned}
\end{gather*}

\noindent We can also write~$\mat{L}_1$ in a more compact form, namely
\begin{gather*} 
	\mat{L}_1 = \left(
		\begin{array}{@{}cl@{}}
			\mat{A} & \mat{E}^\T \\
			\mat{E} & \B_~     
		\end{array}\right),
\shortintertext{where}
	\B_~ = \mat{B} \otimes \mat{I}_{t+2} =
		\begin{pNiceMatrix}[nullify-dots]
			\mat{B} &         &         \\
			        &  \Ddots &         \\
			        &         & \mat{B} 
		\end{pNiceMatrix}, \quad \mat{E} =
			\begin{pNiceMatrix}[nullify-dots]
		  		\mat{E}_1   \\
		 		   \Vdots    \\
				\mat{E}_{t+2}
			\end{pNiceMatrix}.
\end{gather*}

\begin{enumerate}[label = \textbullet\ \emph{Step\/} \arabic*:, labelsep = *, leftmargin = 0pt, itemindent = *, listparindent = \parindent, itemsep = \bigskipamount]
\item block decomposition of the determinant.
Using Schur's determinant identity, any eigenvalue $\lambda$  of the Laplacian
$\mat{L}_1$ $\bigl($this is also $\lambda(H_1) \bigr)$ solves the equation
\begin{align*}
	0=\abs*{\lambda\mat{I} - \mat{L}_1}
		&=	\begin{vmatrix}
				\lambda\mat{I} - \mat{A} &     -\mat{E}^\T       \\
				        -\mat{E}         & \lambda\mat{I} - \B_~ 
			\end{vmatrix} \\
		&  =    \abs*{\lambda\mat{I} - \B_~} \cdot \abs*{\lambda\mat{I} - \mat{A} - \mat{E}^\T(\lambda\mat{I}{-}\B_~)^{-1}\mat{E}} \\
		&  =    \abs*{\lambda\mat{I} - \mat{B}}^{t+1} \cdot \Bigl| \lambda\mat{I} - \mat{A} -
			\sum_{i=1}^{t+2} \mat{E}_i^\T (\lambda\mat{I}{-}\mat{B})^{-1} \mat{E}_i \Bigr| \\
		&\equiv \abs*{\lambda\mat{I} - \mat{B}}^{t+1} \cdot \abs*{\lambda\mat{I} - \mat{A} - \mat{C}} .
\end{align*}
Here the factorization is valid for any $\lambda$ such that $\lambda\mat{I} - \B_~ \neq 0$. Because by Lemma~\ref{lem: eigs of tridiagonal matrices}, we can obtain the eigenvalues of $\mat{B}$, i.e.,
\[
	\lambda_j = 2 + 2\cos\frac{2j\pi}{2k+1}, \qquad j=1,2,\dotsc,k,
\]
which are all less than $4$. Therefore, to find those spike
eigenvalues $\lambda(H_1) \>= 4$, it suffices to consider roots of equation $\abs*{\lambda\mat{I} - \mat{A} - \mat{C}} = 0$. 
	
To further expand $\mat{C}$, let $\vec{\varepsilon}_l$ be the $l$-th canonical vector of $\R^{2t-1}$, then we have the decomposition 
\begin{align*}
	\intercenter{2}{\mat{E}_i = -\vec{\varepsilon}_{\sigma_{i}}\vec{\mu}_1^\T,}
\shortintertext{where}
	\vec{\mu}_1
		&= (1,0,\dotsc,0)^\T \in \R^k, \\
\shortintertext{and}
	\alignas{\vec{\mu}_1}{c}{\sigma_i}
		&=	\begin{dcases}
				1,{}                      & i  =  1           \\
				2         i       {-}3,{} & 2 \<= i \<= t{+}1 \\
				2\alignas{i}{c}{t}{-}1,{} & i  =  t{+}2
			\end{dcases}
\end{align*}
is the column position of~$-1$ in~$\mat{E}_i$. Therefore,
\begin{align*}
	\mat{C}
		&= \sum_{i=1}^{t+2} \vec{\varepsilon}_{\sigma_{i}} \inparen*{\vec{\mu}_1^\T
			(\lambda\mat{I}{-}\mat{B})^{-1}\vec{\mu}_1} \vec{\varepsilon}_{\sigma_{i}}^\T \\
		&= \inparen*{\vec{\mu}_1^\T (\lambda\mat{I}{-}\mat{B})^{-1} \vec{\mu}_1}
			\cdot \sum_{i=1}^{t+2} \vec{\varepsilon}_{\sigma_{i}} \alignas{\vec{e}}{l}{\vec{\varepsilon}_{\sigma_{i}}}^\T \\
		&= \alpha_k(\lambda) \cdot
			\begin{pNiceMatrix}[nullify-dots]
				2 &   &   &        &   &   \\
				  & 0 &   &        &   &   \\
				  &   & 1 &        &   &   \\
				  &   &   & \Ddots &   &   \\
				  &   &   &        & 0 &   \\
				  &   &   &        &   & 2 \\
			\end{pNiceMatrix}\sqrdim{(2t-1)},
\end{align*}
where $\alpha_k(\lambda) = \vec{\mu}_1^\T (\lambda\mat{I}{-}\mat{B})^{-1} \vec{\mu}_1$ is the top-left element of $(\lambda\mat{I} {-} \mat{B})^{-1}$. 

To see what $\alpha_k(\lambda)$ really is, let us consider the sequence $(\alpha_k)_{k\>=1} \equiv \bigr( \alpha_k(\lambda) \bigl)_{k\>=1}$. By Lemma~\ref{lem: recursion for alpha_k} we have the following recursion:
\[
	\begin{dcases}
		         \alpha_k               = \dfrac{1}{\lambda - 2 - \alpha_{k-1}}, & k \>= 1 \\
		\alignas{\alpha_k}{l}{\alpha_0} = -1
	\end{dcases}.
\]
Let
\[
	x = \frac{1}{2}\inparen*{\lambda - 2 - \sqrt{\lambda(\lambda {-} 4)}}, \qquad y = \frac{1}{2}\inparen*{\lambda - 2 + \sqrt{\lambda(\lambda {-} 4)}}.
\]
Then the solution to the recursion can be written in the following form,
\[
	\alpha_k =
		\begin{dcases}
			\frac{x^{k}y(x{+}1) - xy^k(y{+}1)}{x^k(x{+}1) - y^k(y{+}1)} \in \inparen*{0,\frac{2k-1}{2k+1}},{}
				& \lambda > 4 \\
			\frac{2k-1}{2k+1} < 1,{}
				& \lambda = 4
		\end{dcases}.
\]

\item let $k \rightarrow \infty$ to find a full  factorization of the
  limiting determinant.

With the preparation above, we can now let $k \rightarrow \infty$,
i.e., we investigate the uniform tree~$H_{1,k}$ when branch length
tends to infinity. This limiting tree is denoted by $H_{1,\infty}$. Our remaining task is to show
\begin{equation}
  \label{eq: claim}
  \#\set[\lambda_j]{\lambda_j(H_{1,\infty}) > 4,\ 1\<=j\<=n} = t.
\end{equation}
Suppose for a moment that this claim is true. By continuity of $H_{1,k} \mapsto \lambda_j(H_{1,k})$, the above equation implies existence of a $k_0$ such that for all $k' \>= k_0$, we have
\begin{align*}
	t	&= \#\set[\lambda_j]{\lambda_j(H_{1,\infty}) > 4 ,\ 1\<=j\<=n} \\
		&= \#\set[\lambda_j]{{\lim_{k\rightarrow\infty}}\lambda_j(H_{1,k}) > 4 ,\ 1\<=j\<=n} \\
		&= \#\set[\lambda_j]{\lambda_j(H_{1,k'}) > 4,\ 1\<=j\<=n} \\
		&\<= {\n4+(H_{1,k'})}.
\end{align*}
By Lemma~\ref{lem: n4+ upper bound}, ${\n4+(H_{1,k'})} \<= t$. Hence $\n4+(H_{1,k'}) = t$ and our theorem is proved.

It remains to establish the claim~\eqref{eq: claim}. Note that 
when $k \rightarrow \infty$,
\begin{align}
	\intercenter{2}{
		\alpha_k \rightarrow \alpha \defas \frac{1}{2}\inparen*{\lambda - 2 - \sqrt{\lambda(\lambda {-} 4)}} \in (0,1], \qquad  \lambda \>= 4.
	} \notag
\shortintertext{So we have}
	{\lim_{k\rightarrow\infty}} \det\inparen*{\lambda\mat{I} - \mat{A} - \mat{C}}
		&= \det\left(\renewcommand{\arraystretch}{1.2}
			\begin{NiceArray}[nullify-dots]{
				L@{\phantom{\lambda-}}
				C@{\phantom{-\alpha\lambda-}}
				C!{\phantom{-\alpha}}
				C!{\phantom{\lambda-}}
				C@{\phantom{-\alpha}}
				R}
				         \lambda{-}3{-}2\alpha 
				&          1                                              &                                                                       
					&        &                                                         &                                \\
				\phantom{\lambda -}1           
				& \alignas{1}{c}{\phantom{2}\lambda{-}2\phantom{\lambda}} &          1                                                            
					&        &                                                         &                                \\
				&          1                                              & \alignas{1}{c}{\phantom{\alpha}\lambda{-}3{-}\alpha\phantom{\lambda}} 
					& \Ddots &                                                         &                                \\
				&                                                         &                                                                       
					& \Ddots &                                                         &                                \\
				&                                                         &                                                                       
					& \Ddots & \alignas{1}{c}{\phantom{2}\lambda{-}2\phantom{\lambda}} &           1\phantom{- 2\alpha} \\
				&                                                         &                                                                       
					&        &          1                                              & \lambda{-}3        {-}2\alpha 
			\end{NiceArray}\right) \notag\\
	&= \det\left(
		\begin{NiceArray}[nullify-dots]{RCCCL:R!{\phantom{1}}CCL}
			\alignas{\lambda{-}3}{l}{\lambda{-}3{-}2\alpha} & & &   &   & 1 &                         &                          &   \\
			  &     \alignas{1}{c}{\lambda{-}3{-}\alpha}      & &   &   & 1 &                         &                          &   \\
			  &   &                   \Ddots                    &   &   &   &         \Ddots          &          \Ddots          &   \\
			  &   & &     \alignas{1}{c}{\lambda{-}3{-}\alpha}      &   &   &                         &                          & 1 \\
			  &   & & & \alignas{3{-}2\alpha}{r}{\lambda{-}3{-}2\alpha} &   &                         &                          & 1 \\
			\hdashline
			1 & 1 &                                             &   &   & \alignas{\lambda\fphantom{.5}{-1}}{l}{\lambda{-}2} & & &   \\
			  &   &           \Ddots                            &   &   &   &                      \Ddots                      & &   \\
			  &   &                            \Ddots           &   &   &   & &                       \Ddots                     &   \\
			  &   &                                             & 1 & 1 &   & & & \alignas{\fphantom{.5}{1-}2}{r}{\lambda{-}2}     
		\end{NiceArray}\right) \notag\\
	&= \dfrac{1}{\lambda{-}2} \cdot \det\left(
		\begin{NiceArray}[nullify-dots]{LCCCR}
			         b  - \Delta & -1 &        &    &                             \\
			\phantom{b} - 1      &  b & \Ddots &    &                             \\
			                     &    & \Ddots &    &                             \\
			                     &    & \Ddots &  b & {} - \alignas{\Delta}{l}{1} \\
			                     &    &        & -1 & b  -          \Delta 
		\end{NiceArray}\right)\sqrdim{t}, \label{eq: matrix T}
\shortintertext{where}
	\intercenter{2}{
		 b = (\lambda{-}2)(\lambda{-}3{-}\alpha) - 2, \qquad \Delta = \alpha(\lambda{-}2) - 1,
	} \notag
\end{align}
and Equation~\eqref{eq: matrix T} is obtained by Schur's determinant
identity. Note that both $b$ and~$\Delta$ depend on $\lambda$.

Denote the matrix in Equation~\eqref{eq: matrix T} by~$\mat{T}$. To find the roots of equation ${\det}\mat{T} = 0$, let
\begin{equation}
	\label{eq: g(lambda, theta)}
	\operatorname{g}(\lambda,\theta) \defas \sin{(t{+}1)}\theta - 2\Delta\sin{t\theta} + \Delta^2\sin{(t{-}1)\theta},
\end{equation}
and $\mu_l(\lambda),\ l = 1,2,\dotsc,t$ be eigenvalues of matrix~$\mat{T}$, viewing~$\lambda$ as a parameter. When $\lambda\>=4$, $\mat{T}$ is real symmetric, by Lemma~\ref{lem: eigs of tridiagonal matrices}, we have
\begin{gather}
	{\det}\mat{T} = \prod_{l=1}^t \mu_l(\lambda) = \prod_{l=1}^t (b - 2\cos\theta_l), \label{eq: factorization of |mat{T}|}
\intertext{where $ \theta_l \equiv \theta_l(\lambda) \in \R$ is given by}
	\begin{dcases}
		\operatorname{g}(\lambda,\theta_l) = 0, & \text{ if } \sin\theta_l \neq 0 \text{ and } \lambda > 4; \\
		\cos\theta_l = 1,                       & \text{ if } \sin\theta_l   =  0 \text{ and } \lambda = 4. 
	\end{dcases} \notag
\end{gather}
Without loss of generality, {we can restrict $\theta_l \in (0,\pi)$ as we are considering $\lambda>4$.}

With Equation~\eqref{eq: factorization of |mat{T}|}, we factorized
${\det}\mat{T}$, which is essentially a polynomial of~$\lambda$ of
order~$2t$. {Thus the number of our desired~$\lambda$ can
  be obtained by counting roots from lower order polynomials. Formally
  speaking, we solve for
\begin{gather*}
	\mu_l(\lambda) = b(\lambda) - 2\cos\theta_l = 0,
\shortintertext{with restriction}
	\operatorname{g}(\lambda,\theta_l) = 0, \quad \theta_l \in (0,\pi),
\end{gather*}
and count all the roots greater than~$4$.}

\item find~$t$ number of $\lambda(H_{1,\infty})$'s that are greater than~$4$.

Differentiating $\mu_l(\lambda)$ gives
\[
	\frac{\d\mu_l}{\d\lambda} = \lambda{-}3 + \sqrt{\lambda(\lambda{-}4)} + \frac{2}{\sqrt{\lambda(\lambda{-}4)}} + 2\sin\theta_l\frac{\d\theta_l}{\d\lambda}.
\]
Note that when $\lambda > 4$, the left hand side of the above equation are all positive except for the last term. Also note that 
\begin{align*}
	\mu_l(4)										&= -2 - 2\cos\theta_l(4) < 0, \\
	\mu_l\inparen*{ 2{\textstyle\sqrt{2}{+}2)} }	&= 2 - 2\cos\theta_l(2\sqrt{2}{+}2) > 0.
\end{align*}

In the following we will consider function $\theta_l(\lambda) \in
(0,\pi)$ determined by $\operatorname{g}(\lambda,\theta_l(\lambda)) =
0$, and show that $\theta_l'(\lambda) > 0$, thus exactly $t$~number of
$\lambda_j(H_{1,\infty})$'s that are strictly greater than~$4$ can be
found. Observe  that
\begin{align*}
	\operatorname{g}(\lambda,\theta) 
		&= \Im\inparen*{\e^{\i(t{+}1)\theta} - 2\Delta\e^{\i t\theta} + \Delta^2\e^{\i(t{-}1)\theta}} \\
		&= \Im\inparen*{\e^{\i t\theta} \inparen*{\e^{\i\theta} - 2\Delta + \Delta^2\e^{-\i\theta}} \vphantom{\Delta^2\e^{\i(t{-}1)\theta}}}.
\end{align*}
Let 
\begin{equation} \label{eq: omega}
	\rho\e^{\i\omega} \equiv \inparen*{\e^{\i\theta} - 2\Delta + \Delta^2\e^{-\i\theta}},
\end{equation} 
where $\rho$ is a positive number (obviously it cannot be~$0$ since $\Delta \in \R$ when $\lambda > 4$) and $\omega \in [0,2\pi)$ is some function of $\Delta$ and~$\theta$. Then
\[
	\operatorname{g}(\lambda,\theta) = \Im\inparen*{\e^{\i t\theta} \cdot \rho\e^{\i\omega}} = \rho\sin(t\theta + \omega).
\]
So $\operatorname{g}(\lambda,\theta_l) = 0$ implies $t\theta_l + \omega = \Z\pi$. Differentiating this equation with respect to~$\lambda$, we obtain
\begin{gather*}
	t\frac{\d\theta_l}{\d\lambda} + \frac{\partial\omega}{\partial\theta_l}\frac{\d\theta_l}{\d\lambda} 
	+ \frac{\partial\omega}{\partial\Delta}\frac{\d\Delta}{\d\lambda} = 0,
\shortintertext{i.e.,}
	\frac{\d\theta_l}{\d\lambda} = -\frac{\dfrac{\partial\omega}{\partial\Delta}\dfrac{\d\Delta}{\d\lambda}}{t + \dfrac{\partial\omega}{\partial\theta_l}}.
\end{gather*}
We already know that $\dfrac{\d\Delta}{\d\lambda} < 0$. To get the signs of $\dfrac{\partial\omega}{\partial\Delta}$ and $\dfrac{\partial\omega}{\partial\theta_l}$, let us go back to Equation~\eqref{eq: omega}, and expand it into real and imaginary part, namely
\begin{align}
	\rho\cos\omega	&= \inparen*{1+\Delta^2}\cos\theta - 2\Delta, \label{eq: omega_re} \\
	\rho\cin\omega	&= \inparen*{1-\Delta^2}\cin\theta.           \label{eq: omega_im}
\end{align}
Note that by Equation~\eqref{eq: omega_im}, $\sin\omega > 0$, and partially differentiating Equation~\eqref{eq: omega_re} with respect to $\Delta$ and~$\theta$, we have
\begin{align*}
	\frac{\partial\omega}{\partial\Delta}
		&= -\frac{2(\Delta\cos\theta - 1)}{\rho\sin\omega} > 0, \\
	\frac{\partial\omega}{\partial\theta}
		&= \frac{\alignas{-2(\Delta\cos\theta - 1)}{c}{\inparen*{1+\Delta^2}\sin\theta}}{\phantom{-}\rho\sin\omega} > 0.
\end{align*}
Hence $\theta_l'(\lambda) > 0$. The claim~\eqref{eq: claim} is thus
established and the proof is complete. \qedhere
\end{enumerate}
\end{proof}

\subsection{Uniform trees with trunk length\/ $m = 0$}
As shown by the numerical results in 
Table~\ref{tab: m=0}, this case is quite different of the previous two
situations.

\begin{thm} \label{thm: m=0}
  Consider a uniform tree  with trunk length $m=0$.  
  We have
  \[
	\n4+(H_{0,k}) \<= \floor*{\frac{\nT(H_{0,k}) + 1}{2}}, \quad \forall k \>= 1,
  \]
  where~$[x]$ is the largest integer no greater than~$x$. Moreover,
  the equality is achieved when\/~$k$ is large enough, i.e.,  branchs
  are long enough.
\end{thm}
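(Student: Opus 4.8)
The plan is to adapt the block-matrix machinery of Theorem~\ref{thm: m=1} to the $m=0$ geometry, and then, rather than chase the roots of a transcendental determinant, to count the eigenvalues $\>=4$ directly through a single inertia computation carried out at the threshold $\lambda=4$. First I would write the Laplacian $\mat{L}_0$ of $H_{0,k}$ in exactly the block form used in Theorem~\ref{thm: m=1}, the only structural change being that, since consecutive T-junctions are now adjacent, the main-path block $\mat{A}$ is the $t\times t$ tridiagonal matrix with constant diagonal $3$ and off-diagonal $-1$, while the branch contribution is $\mat{C}_k(\lambda)=\alpha_k(\lambda)\,\mathrm{diag}(2,1,\dotsc,1,2)$ with the same scalar $\alpha_k(\lambda)$ as before and the two $2$'s coming from the doubled branches at the two end T-junctions. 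As in Theorem~\ref{thm: m=1}, the branch blocks contribute only eigenvalues strictly below $4$ (Lemma~\ref{lem: eigs of tridiagonal matrices}), so every spike of $H_{0,k}$ is captured by the reduced $t\times t$ problem.

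The decisive step, and the point of departure from the $m=1$ argument, is to evaluate everything at $\lambda=4$. Setting $\mat{N}\defas\mat{L}_0-4\mat{I}$ and Schur-complementing out the branch block (legitimate because that block minus $4\mat{I}$ is negative definite, its spectrum lying below $4$), Haynsworth inertia additivity yields
\[
	\n4+(H_{0,k}) = n_{-}(\mat{M}_k) + n_{0}(\mat{M}_k), \qquad \mat{M}_k \defas 4\mat{I} - \mat{A} - \mat{C}_k(4),
\]
where $n_{-},n_{0}$ count negative and zero eigenvalues. Using $\alpha_k(4)=\tfrac{2k-1}{2k+1}$ from Step~1 of Theorem~\ref{thm: m=1}, the symmetric matrix $\mat{M}_k$ is the explicit $t\times t$ tridiagonal matrix with off-diagonal $1$, interior diagonal $1-\alpha_k(4)=\tfrac{2}{2k+1}$ and endpoint diagonal $1-2\alpha_k(4)=\tfrac{3-2k}{2k+1}$. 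The whole theorem is thereby reduced, for every fixed $k$, to the inertia of one concrete parameter-dependent tridiagonal matrix, with no passage to $k\to\infty$ needed for the exact count.

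I would then read off the inertia from the Sturm sequence of leading principal minors $D_0,D_1,\dotsc,D_t$ of $\mat{M}_k$, whose interior entries obey the constant-coefficient recursion $D_j = q\,D_{j-1}-D_{j-2}$ with $q\defas\tfrac{2}{2k+1}\in(0,2)$ and modified first and last steps governed by the endpoint diagonal. This recursion solves in closed form with characteristic roots $\e^{\pm\i\phi}$, $\cos\phi=q/2$, so the minors are oscillatory and the number of sign changes, equal to $n_{-}(\mat{M}_k)$, becomes a clean function of $\phi$ and $t$. As $k\to\infty$ one has $q\to0^{+}$, $\phi\to\tfrac{\pi}{2}$, and the minors fall into the period-four sign pattern $+,-,-,+,+,-,-,\dotsc$, which produces exactly $\floor*{\tfrac{t+1}{2}}$ negative eigenvalues; for finite $k$ the smaller value of $\phi$ yields no additional sign changes, and the count is non-decreasing in $k$ (as already guaranteed by the pendant-vertex consequence of Lemma~\ref{lem: interlacing theorem}). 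Together with the bookkeeping of any zero minors this gives simultaneously the uniform bound $\n4+(H_{0,k})\<=\floor*{\tfrac{t+1}{2}}$ and the equality for $k$ large.

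The main obstacle I anticipate is the combinatorics of this sign-change count: one must track how the two boundary terms at $j=1$ and $j=t$ perturb the otherwise periodic pattern of the $D_j$, control the parity of $t$, and prove monotonicity in $\phi$ so as to pin down both the limiting value $\floor*{(t+1)/2}$ and the inequality at every $k$. A secondary technical point is handling the degenerate cases $n_{0}(\mat{M}_k)>0$, where an eigenvalue lands exactly on $4$, so that they are folded into the count without overshooting the bound. As a cross-check one could instead mirror Theorem~\ref{thm: m=1} verbatim, reducing to $\det\mat{M}_k(\lambda)=0$, letting $k\to\infty$, and counting roots $>4$ via Lemma~\ref{lem: eigs of tridiagonal matrices}; there the halving of the count manifests as the boundary degeneracy at $\lambda=4$, at which $\alpha=1$ and the factor $1-\alpha^2$ multiplying $\sin\theta$ in the associated transcendental relation vanishes.
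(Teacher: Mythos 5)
Your plan is sound and takes a genuinely different route from the paper's. The paper proves the two halves of the theorem separately: the upper bound for every $k$ goes by induction on $\nT$, deleting the edge between an end T-junction and its neighbour so that $H_0$ splits into a path (all eigenvalues ${<}4$ by \eqref{eq: path graph eigs}) and a subtree with $\nT{-}2$ T-junctions, then invoking Lemma~\ref{lem: interlacing theorem}; the equality for large $k$ is obtained as in Theorem~\ref{thm: m=1}, by letting $k\to\infty$ in the Schur-complemented determinant, factorizing via Lemma~\ref{lem: eigs of tridiagonal matrices} into factors $\mu_{0,l}(\lambda) = (\lambda{-}3{-}\alpha) + 2\cos\theta_{0,l}$, proving $\theta_{0,l}'(\lambda) < 0$, and counting the indices with $\mu_{0,l}(4) < 0$. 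You instead stay at finite $k$, evaluate at the threshold $\lambda = 4$, and convert the spike count into the inertia of one explicit tridiagonal matrix via Haynsworth additivity — a tool the paper never uses, but legitimate here since the branch block minus $4\mat{I}$ is negative definite; your $\mat{M}_k$ (interior diagonal $2/(2k{+}1)$, endpoints $(3{-}2k)/(2k{+}1)$, off-diagonal $1$) checks out against the paper's limiting matrix with endpoint entries $\lambda{-}3{-}2\alpha$ and interior $\lambda{-}3{-}\alpha$. What your route buys: an exact finite-$k$ statement with no transcendental root-counting and no continuity-of-eigenvalues limit interchange, and the uniform bound falls out of monotonicity in $k$ plus the large-$k$ equality, replacing the paper's separate induction. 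Two cautions on the deferred combinatorics: for even $t$ the last minor $D_t$ vanishes to leading order as $q = 2/(2k{+}1) \to 0$ (e.g. $t=2$ gives $D_2 = 4q(q{-}1) < 0$ while $t=4$ gives $D_4 = 6q + O(q^2) > 0$), so first-order terms in $q$ are genuinely needed — and you were right not to take the inertia of the limit matrix itself, which overshoots ($t=2$ yields $n_- + n_0 = 2 \neq 1$ there). Also, your parenthetical claim that finite $k$ produces ``no additional sign changes'' for \emph{every} $k$ is stronger than needed and not obvious; rely on the monotonicity argument instead, and note that the threshold $k_0$ must depend on $t$, since the phase drift of order $t\,(\pi/2 - \phi)$ in the Sturm sequence has to remain small.
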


\begin{proof}
  We proceed in two steps.
\begin{enumerate}[label = \textbullet\ \emph{Step} \arabic*:, wide = 0pt, listparindent = \parindent, itemsep = \medskipamount]
\item prove the upper bound by induction on~$\nT$.
\begin{itemize}[label = $\lozenge$, labelsep = *, leftmargin = 0pt, itemindent = \parindent - (\widthof{\itshape{B\/}} - \widthof{\itshape{B}}), listparindent = \parindent]
\item \emph{Base Case with\/ $\nT=2$}
	
    When $\nT = 2$, let $\widetilde{H} = (V, E {\setminus}
    \inbrace{e})$, where $e$ connects the only two T-junctions
    of~$H_0$. Then $\widetilde{H}$ consists of two paths, say
    $\widetilde{P}_1$ and~$\widetilde{P}_2$, respectively $\bigl($see
    Figure~\ref{fig: m=0;nT=2,3}(a)$\bigr)$. By Equation~\eqref{eq: path graph eigs},
    $ \max\bigl\{ \lambda_1(\widetilde{P}_1), \lambda_1(\widetilde{P}_2) \bigr\}< 4$. By Lemma~\ref{lem: interlacing theorem},
    $\lambda_2(H_0) \<= \lambda_1(\widetilde{H}) = \max\bigl\{
    \lambda_1(\widetilde{P}_1), \lambda_1(\widetilde{P}_2) \bigr\} <
    4$. Since one can cut either of the branches in~$H_0$ to produce a starlike tree, by Lemma~\ref{lem: interlacing theorem} this means $\lambda_1(H_0) \>= 4$. Hence we have $\n4+ = 1 \<= \floor{(\nT{+}1)/2}$. 

\begin{figure} 
	\centering
	\includegraphics[width = .9\textwidth]{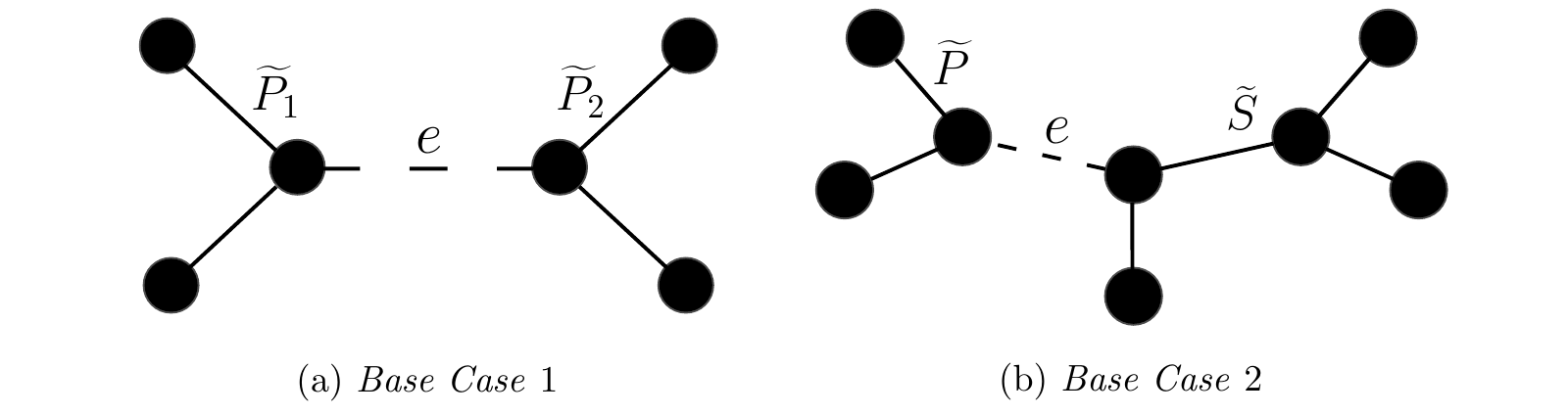}
	\caption{Two examples of base cases in the proof of Theorem~\ref{thm: m=0}.}
	\label{fig: m=0;nT=2,3}
\end{figure}

\item \emph{Base Case with\/ $\nT=3$}

When $\nT = 3$, let $\widetilde{H} = (V, E {\setminus} \inbrace{e})$, where $e$ connects either pair of the consecutive T-junctions of~$H_0$. Then $\widetilde{H}$ consists of one path and one starlike tree, say $\widetilde{P}$ and~$\widetilde{S}$, respectively $\bigl($see Figure~\ref{fig: m=0;nT=2,3}(b)$\bigr)$. Then $\lambda_1(\widetilde{P}) < 4$ by Equation~\eqref{eq: path graph eigs}, $\lambda_1(\widetilde{S}) > 4 > \lambda_2(\widetilde{S})$ by Lemma~\ref{lem: starlike tree}. Furthermore, by Lemma~\ref{lem: interlacing theorem}, $\lambda_1(H_0) \>= \lambda_1(\widetilde{H}) = \max\bigl\{ \lambda_1(\widetilde{P}), \lambda_1(\widetilde{S}) \bigr\} = \lambda_1(\widetilde{S}) > 4$, and $\lambda_3(H_0) \<= \lambda_2(\widetilde{H})= \max\bigl\{ \lambda_1(\widetilde{P}),\lambda_2(\widetilde{S}) \bigr\} < 4$. Hence, we have $\n4+ \<= 2 \<= \floor{(\nT{+}1)/2}$. 

\item \emph{Induction\/}
\quad
Suppose $\n4+ \<= \floor{(\nT{+}1)/2}$ holds for all $\nT \<= i$, for some $i \>= 2$.

When $\nT = i{+}1$, let $\widetilde{H} = (V, E {\setminus}
\inbrace{e})$, where $e$ connects either of the T-junctions at one end
of~$H_0$ and its consecutive T-junction. Then $\widetilde{H}$ consists
of a path and a tree, say $\widetilde{P}$ and~$\widetilde{H}_1$,
respectively. Note that $\widetilde{H}_1$ has $\nT{-}2$ T-junctions,
and thus at most $\floor{(\nT{-}2{+}1)/2} = \floor{(\nT{-}1)/2}$ eigenvalues
are no smaller than~$4$. Since all the eigenvalues of~$P$ are less than~$4$, by Lemma~\ref{lem: interlacing theorem}, $\n4+ \<= \floor{(\nT{-}1)/2} + 1 = \floor{(\nT{+}1)/2}$.	
\end{itemize}

\item prove the condition to achieve the upper bound.
  
  We will use $t$ instead of $\nT$ for simplicity. Following a
  procedure similar to the one used in the proof of Theorem~\ref{thm: m=1} with $k \rightarrow \infty$, we get
\begin{align*}
	{\lim_{k\rightarrow\infty}}\det\inparen*{\lambda\mat{I} - \mat{L}_0}
		&= \det\inparen*{\lambda\mat{I} - \B_~} \cdot \det\left(
			\begin{NiceArray}[nullify-dots]{LLCRR}
				\alignas{\lambda{-}3}{l}{\lambda{-}3{-}2\alpha} & \phantom{{-}1\alpha}1 &        &                                                                      &   \\
				1 &            \alignas{111}{c}{\lambda{-}3{-}\alpha}                   & \Ddots &                                                                      &   \\
				  &                                                     \Ddots          & \Ddots &     \Ddots                                                           &   \\
				  &                                                                     & \Ddots & \alignas{111}{c}{\lambda{-}3{-}\alpha}                               & 1 \\
				  &                                                                     &        & 1\phantom{\lambda{-}1} & \alignas{3{-}2\alpha}{r}{\lambda{-}3{-}2\alpha}
			\end{NiceArray}\right)\sqrdim{t} \\
		&= \det\inparen*{\lambda\mat{I} - \B_~} \cdot \prod_{l=1}^t \mu_{0,l}(\lambda),
\end{align*} 
where $\mat{L}_0$ is a short for $\mat{L}(H_0)$, $\B_~$ is just the one defined previously in the proof of Theorem~\ref{thm: m=1}, and
\[
	\mu_{0,l}(\lambda) \defas (\lambda{-}3{-}\alpha) + 2\cos\theta_{0,l},
\]
with also the same~$\alpha$ defined previously, and $\theta_{0,l} \equiv \theta_{0,l}(\lambda) \in \R$ is given by
\begin{gather*}
	\begin{dcases}
		\operatorname{g_0}(\lambda,\theta_{0,l}) = 0,	& \text{ if } \sin\theta_{0,l} \neq 0 \text{ and } \lambda > 4;  \\
		\cos\theta_{0,l} = -1              ,         	& \text{ if } \sin\theta_{0,l}   =  0 \text{ and } \lambda = 4,
	\end{dcases}
\shortintertext{where}
	\operatorname{g_0}(\lambda,\theta) \defas \sin{(t{+}1)}\theta + 2\alpha\sin{t\theta} + \alpha^2\sin{(t{-}1)\theta}.
\end{gather*}

{Similarly restricting $\theta_{0,l} \in (0,\pi)$ as before.} Differentiating $\mu_{0,l}(\lambda)$ with respect to~$\lambda$ gives
\[
	\frac{\d\mu_{0,l}}{\d\lambda} = \frac{\lambda-2}{2\sqrt{\lambda(\lambda{-}4)}} + \frac{1}{2} - 2\sin\theta_{0,l}\frac{\d\theta_{0,l}}{\d\lambda}.
\]
In contrast to the proof of Theorem~\ref{thm: m=1}, this time we expect $\theta_{0,l}'(\lambda) < 0$. To prove this, let 
\[
	\rho_0\e^{\i\omega_0} \equiv \inparen*{\e^{\i\theta} + 2\alpha + \alpha^2\e^{-\i\theta}},
\]
from which one can similarly derive
\begin{align*}
	\frac{\partial\omega_0}{\partial\alpha}
		&= -\frac{2(\alpha\cos\theta + 1)}{\rho_0\sin\omega_0} < 0, \\
	\frac{\partial\omega_0}{\partial\theta}
		&= \frac{\alignas{-2(\Delta\cos\theta - 1)}{c}{\inparen*{1+\alpha^2}\sin\theta}}{\phantom{-}\rho_0\sin\omega_0} > 0,
\end{align*}
and $\operatorname{g_0}(\lambda,\theta_{0,l}) = 0$ implies
\begin{equation} \label{eq: to find theta_l}
	t\theta_{0,l} + \omega_0 = \Z\pi.
\end{equation}
Differentiating the above equation with respect to~$\lambda$ eventually leads to 
\[
	\frac{\d\theta_{0,l}}{\d\lambda} = -\frac{\dfrac{\partial\omega_0}{\partial\alpha}\dfrac{\d\alpha}{\d\lambda}}
		{t + \dfrac{\partial\omega_0}{\partial\theta_{0,l}}} < 0.
\]

However, similar to  the proof of  Theorem~\ref{thm: m=1},  we here have 
\begin{gather*}
	\mu_{0,l}\Bigl(\frac{16}{3}\Bigr) = 2\cos\theta_{0,l}\Bigl(\frac{16}{3}\Bigr) + 2 > 0,
\shortintertext{but}
	\mu_{0,l}(4) = 2\cos\theta_{0,l}(4) = 2\cos\inparen*{\frac{l\pi}{t}}. 
\end{gather*}
So $\mu_{0,l}(4) < 0$ only if $t/2 < l < t$, and it is trivial to see that there are exactly $\floor{(t{+}1)/2}$ number of such~$l$'s, which completes our proof. \qedhere
\end{enumerate}
\end{proof}

\section{Analysis of dendrites of RGCs}\label{sec:RGC}
In the previous sections, we have analyzed in detail how 
characteristic numbers such as T-junction numbers, branch and trunk
lengths impact on the number of spike eigenvalues of the graph
Laplacian of the class of uniform trees. The purpose of this section is to show how these
theoretical results on uniform trees can help us understand the phase
transition observed in dendrite graphs as shown in Figure~\ref{fig: RGC & eigs distributions}.

First Table~\ref{tab: RGC statistics} gives a few summary  statistics
on different characteristics of the two dendrite graphs shown in
Figure~\ref{fig: RGC & eigs distributions}(a)-(b).

\begin{table}
	\centering
	\caption{Summary statistics for the two dendrite graphs.
      Upper block\textup: number of eigenvalues\/ ${\>=}4$ as well as vertices with degree\/ ${=}3$ and\/ ${\>=}4$. Middle block\textup: distribution of trunk lengths~$m$. Lower block\textup: statistics of branch lengths\/~$k$.}
	\begin{tabular}{crr}
								& RGC~\#100	& RGC~\#60	\\
	\midrule											
		$\n4+$					& 12		& 138		\\
		$\nT$					& 12		& 166		\\
		$\#\{\deg(v) \>= 4\}$	& 0			& 7			\smallskip\\
	\midrule											
	\multicolumn{3}{l}{\% of trunks with different lengths} \\
		$m=0$					& 0\%		& 27.3\%	\\
		$m=1$					& 0\%		& 8.7\%		\\
		$m\>=2$					& 100\%		& 64.0\%	\smallskip\\
	\midrule											
	\multicolumn{3}{l}{branch length statistics}		\\
		min						& 15		& 1			\\
		max						& 132		& 125		\\
		mean					& 64.8		& 20.7		\\
		median					& 64		& 13		\\
	\bottomrule
	\end{tabular}
	\label{tab: RGC statistics}
\end{table}

We proceed as follows.
\begin{enumerate}
\item Each dendrite graph is assimilated as a mixture of uniform trees
  $H_{m,k}$ with different trunk and branch lengths $(m,k)$;
\item
  Using the theoretical results established previously on the number
  of spike eigenvalues for uniform trees, we propose an estimate, say
  $\hatn4+$, for
  the observed number of spike eigenvalues of the dendrite graph, say $\n4+$;
\item
  By confronting these two numbers, the meaningness of uniform trees
  as a model for dendrite graphs can be assessed.
\end{enumerate}

To start with, it is important to note the following features in a
dendrite graph which do not exist in a uniform tree:
\begin{enumerate}
	\item[(d1)]	Branch lengths may vary a lot in a real dendrite graph; 
    \item[(d2)]	Trunks that incident to the same T-junction may have different lengths;
	\item[(d3)]	Not all T-junctions lie along a line, i.e., {there may be T-junctions whose neighboring vertices are all T-junctions}; 
	\item[(d4)]	Not all junctions are T-junctions, i.e., there may be junctions of degree larger than 3.
\end{enumerate}

\begin{figure}
	\centering
	\includegraphics[width = \textwidth]{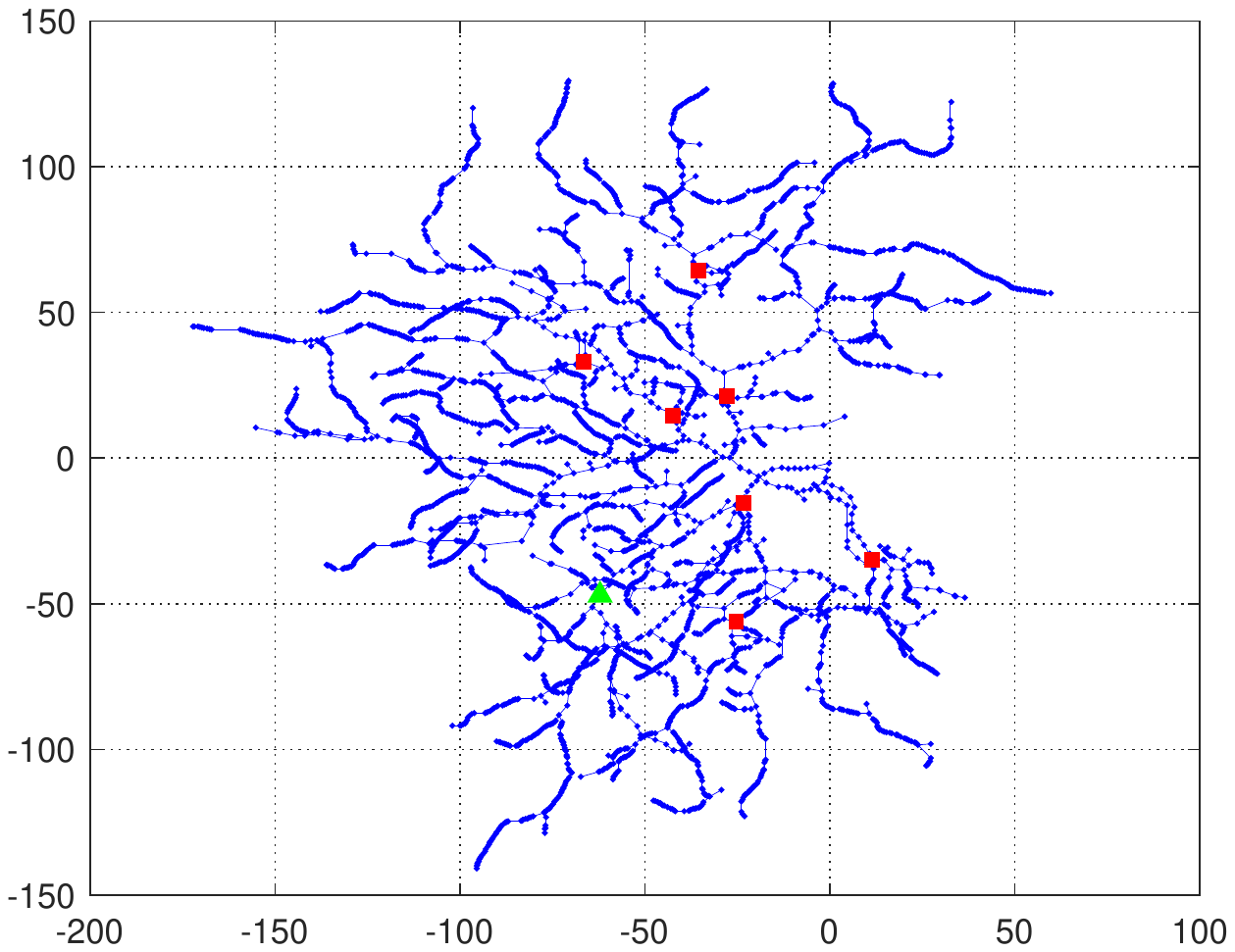}
	\includegraphics[width = .497\textwidth]{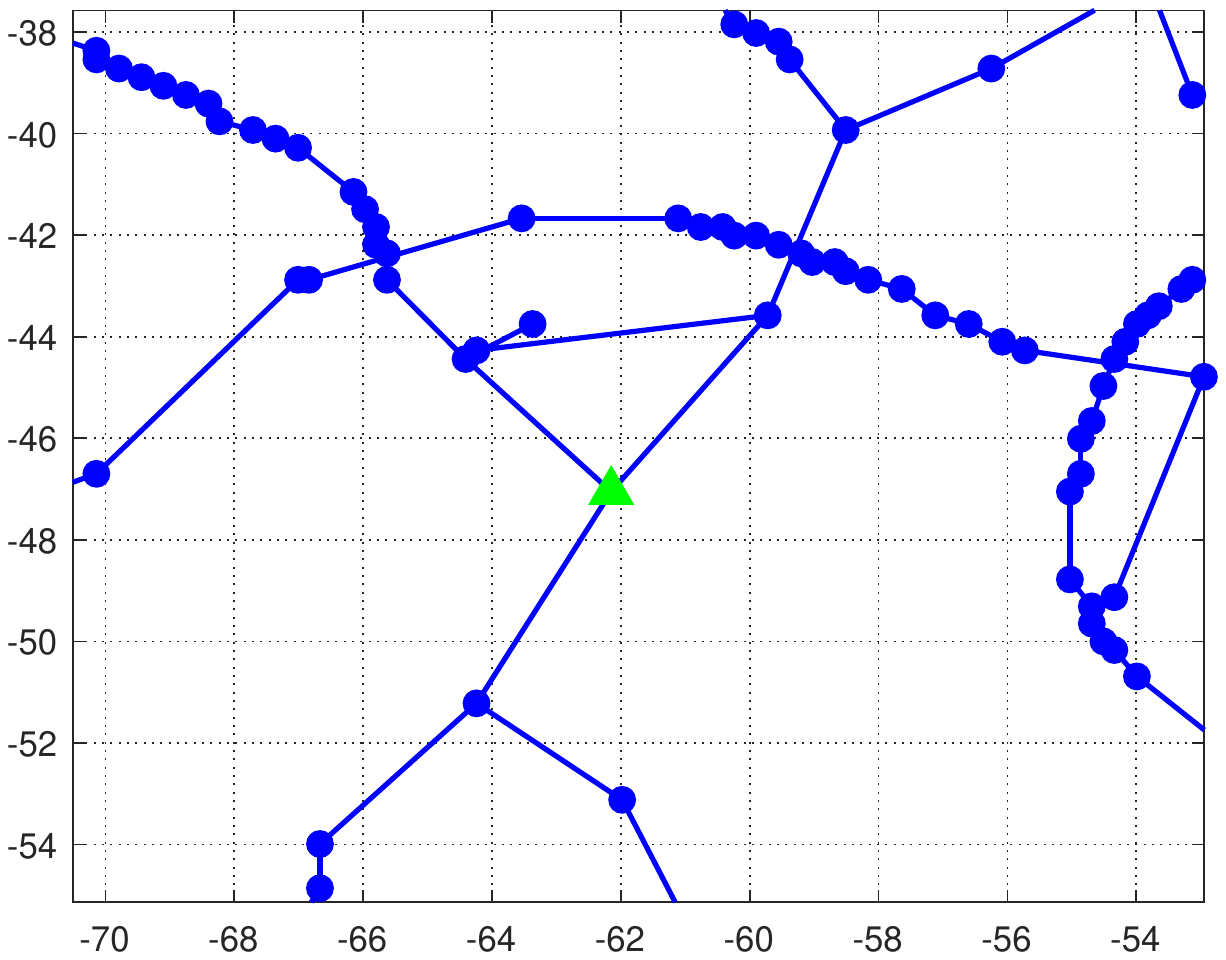}
	\includegraphics[width = .493\textwidth]{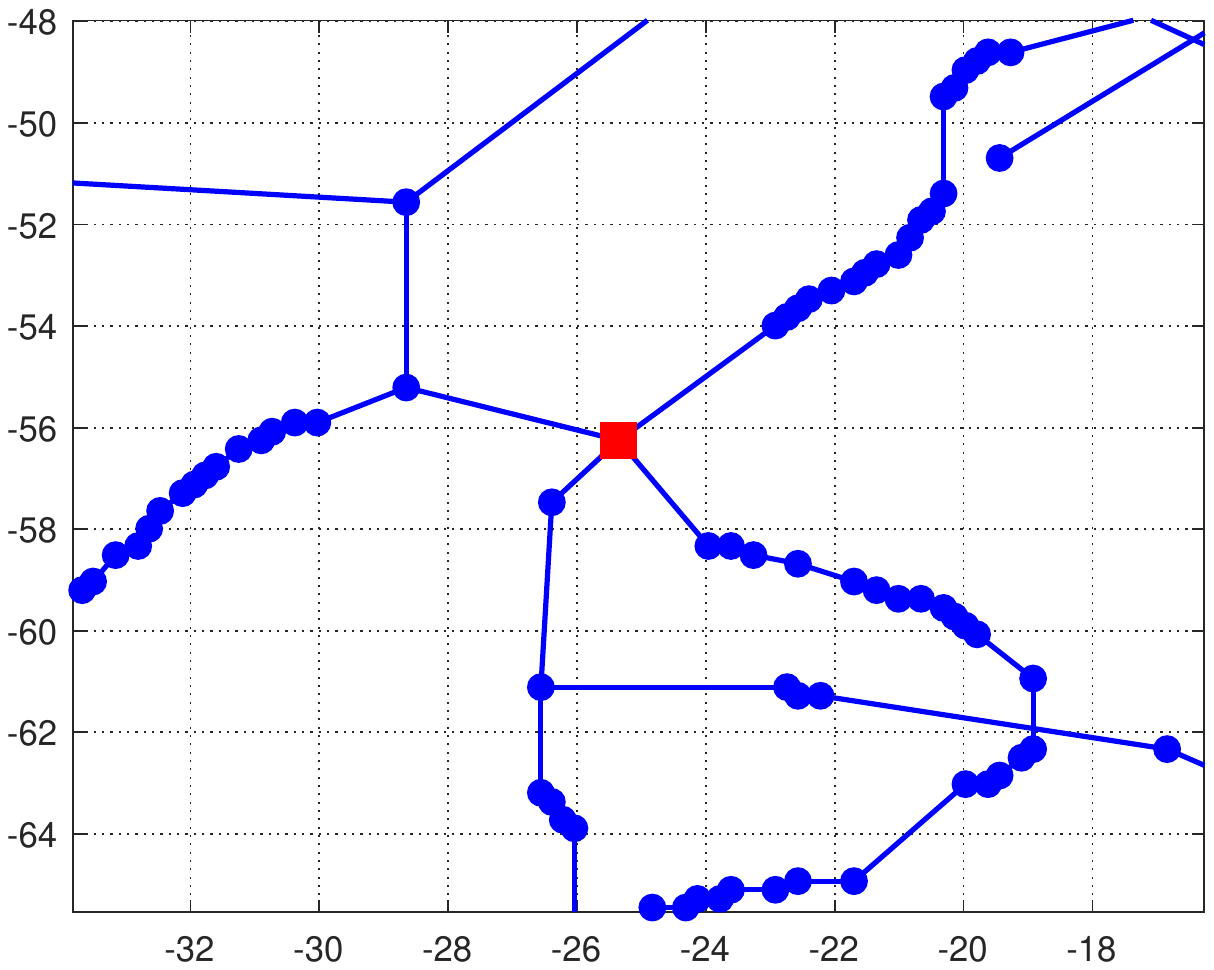}
	\caption{Upper\textup: dendrite graph of RGC $\#60$ with marked vertices corresponding to features \textup{(d3)(}green triangle\textup) and \textup{(d4)(}red square\textup). Lower left\textup: zoom up region around feature \textup{(d3)}. Lower right\textup: zoom up region around the bottom feature \textup{(d4)}.}
	\label{fig: marked dendrite tree}
\end{figure}

{Example of features (d3) and (d4) are given in Figure~\ref{fig: marked dendrite tree}.}

Note that from our previous discussion, {$\n4+(G)$ is
non-decreasing for any dendrite graph~$G$ if any of its branch is
extended by one unit. Therefore,} as long as branches are long enough,
{the limiting} results derived for uniform trees still hold even though branch
lengths are not constant anymore (feature (d1)).
Note  also from Table~\ref{tab: RGC statistics}, very few vertices in
those dendrite graphs have degree greater than~$3$ (feature (d4)).
Finally, to deal with the remaining features (d2)-(d3), we propose
the following method of estimation for the number of spike eigenvalues
$\n4+$.

\smallskip
{\bfseries Estimating  the number of spike eigenvalues $\mathbf{n_{4^{\!+}}}$.} \quad
Let  $G = (V,E)$ be a dendrite graph and consider the set of junction vertices
\[  J:= \bigl\{  v \in V|\deg(v) \>= 3\bigr\}.
\]
\begin{enumerate}
\item[(i)] For each junction vertex $v\in J$, find all trunks incident at
  $v$ (recall that a trunk is a path connecting two junction
  vertices).   Let $m(v)$ be the minimum of the lengths of these
  incident trunks.
  Next classify $v$  into one of the three groups
  $\inbrace{J_0,J_1,J_{2^+\!}}$ according to  the conditions
  $m(v)=0$, $m(v)=1$ and $m(v) \>= 2$, respectively.
  
\item[(ii)]  The estimate for the number of spike eigenvalues $\n4+$ is 
\begin{equation}
    \label{eq: estimate}
	\hatn4+ = \floor*{\frac{n_0+1}{2}} + n_1 + n_{2\scalebox{0.69444}{$^+$}},
\end{equation}
where $(n_0, n_1, n_{2\scalebox{0.69444}{$^+$}}) \defas (\#J_0,\#J_1,\#J_{2^+})$.
$\qed$
\end{enumerate}  

Note that the formula in \eqref{eq: estimate} is
directly inspired by the theoretical results for uniform trees
presented in Theorems~\ref{thm: m>=2}, \ref{thm: m=1} and \ref{thm: m=0}.
Applying this estimator to the two dendrite graphs shown on Figure~\ref{fig: RGC & eigs distributions},
we obtain 
$\hatn4+ = 12$ and~$134$ for RGC~\#100 and RGC~\#60, respectively.
These predicted values are very close to their true value $\n4+$ in Table~\ref{tab: RGC statistics},
namely 
$\n4+ = 12$ and~$138$ for RGC~\#100 and RGC~\#60, respectively.
In conclusion, viewing a dendrite graph as a mixture of uniform trees
of different types can help one predict the number of spike eigenvalues
of the graph Laplacian.

\section*{Acknowledgments}
We sincerely thank Professor N. Saito for providing us with data of
retinal ganglion cells $\#60$ and~$\#100$ used in this paper and the
Matlab code for drawing Figure~\ref{fig: RGC & eigs distributions}.
We are also grateful to Ms.  Yihui Ma who contributed to  a preliminary
draft of the paper.

\end{document}